\newtheorem{theorem}{Theorem}
\newtheorem{lemma}[theorem]{Lemma}
\newtheorem{example}[theorem]{Example}
\newtheorem{corollary}[theorem]{Corollary}
\newtheorem{conjecture}[theorem]{Conjecture}
\newtheorem{claim}{Claim}
\newcommand{\G}{\mathcal{C}}
\newcommand{\M}{\mathcal{M}}
\newcommand{\mmax}{M^+}
\newcommand{\mmin}{M^-}
\newcommand{\family}{\M_n}
\newenvironment{proofc}{\begin{proof}[Proof of Claim]}{\end{proof}}
\DeclareMathOperator{\bi}{bind}
\begin{document}

\title{Degree sequences realizing labelled perfect matchings}
\author{
Joseph Briggs\footnote{Auburn University, Department of Mathematics and Statistics, Auburn U.S.A.
  Email: {\tt jgb0059@auburn.edu}.}
\qquad
Jessica McDonald\footnote{Auburn University, Department of Mathematics and Statistics, Auburn U.S.A.
  Email: {\tt mcdonald@auburn.edu}.
	Supported in part by Simons Foundation Grant \#845698 and NSF grant DMS-2452103}
\qquad
Songling Shan\footnote{Auburn University, Department of Mathematics and Statistics, Auburn U.S.A.
  Email: {\tt szs0398@auburn.edu}.
	Supported in part by NSF grant
			DMS-2345869}
}

\date{}

\maketitle

\begin{abstract}
Let $n\in \mathbb{N}$ and $d_1 \geq d_2 \geq d_n\geq 1$ be integers. There is characterization of when $(d_1, d_1, \ldots, d_n)$ is the degree sequence of a graph containing a perfect matching, due to results of Lov\'{a}sz (1974) and Erd\H{o}s and Gallai (1960). But \emph{which} perfect matchings can be realized in the labelled graph? Here we find the extremal answers to this question, showing that the sequence $(d_1, d_2, \ldots, d_n)$: (1) can realize a perfect matching iff it can realize $\{(1, n), (2,n-1), \ldots, (n/2, n/2+1)\}$, and; (2) can realize any perfect matching iff it can realize $\{(1, 2), (3,4), \ldots, (n-1, n)\}$. Our main result is a  characterization of when (2) occurs, extending the work of Lov\'{a}sz and Erd\H{o}s and Gallai. Separately, we are also able to establish a conjecture of Yin and Busch, Ferrera, Hartke, Jacobsen, Kaul, and West about packing graphic sequences, establishing a degree-sequence analog of the Sauer-Spencer packing theorem. We conjecture an $h$-factor analog of our main result, and discuss implications for packing $h$ disjoint perfect matchings.
\end{abstract}

\section{Introduction}

All graphs in this paper are assumed to be simple.  

Consider a weakly decreasing sequence $d_1 \geq \dots \geq d_n$ of non-negative integers for some $n\in\mathbb{N}$. If there exists a graph $G$ where $V(G)=[n]=\{1, 2, \ldots, n\}$ and $\deg_G(i)=d_i$ for $1\leq i\leq n$, then we say that $(d_1, \ldots, d_n)$ is \emph{graphic}, and that $G$ \emph{realizes} $(d_1, \ldots, d_n)$; we also refer to  $(d_1, \ldots, d_n)$ as the \emph{degree sequence} of $G$. We say that $(d_1, \ldots, d_n)$ \emph{can realize a perfect matching} if there exists a graph $G$ which realizes $(d_1, \ldots, d_n)$ and such that $G$ contains a perfect matching. The following two theorems together answer the question of when  $(d_1, \ldots, d_n)$  can realize a perfect matching.

\begin{theorem}[Lov\'asz \cite{Lov}] \label{Lov} Let $n \in \mathbb{N}$ and  $d_1 \geq \dots \geq d_n$ be non-negative integers. Then $(d_1, d_2, \ldots, d_n)$ can realize a perfect matching iff $(d_1, d_2, \ldots, d_n)$ and $(d_1-1, \ldots, d_n-1)$ are both graphic.
\end{theorem}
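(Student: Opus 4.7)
The forward direction is immediate: if $G$ realizes $(d_1,\ldots,d_n)$ and $M$ is a perfect matching of $G$, then $G$ itself witnesses that $(d_1,\ldots,d_n)$ is graphic, and $G-M$ is a graph in which every vertex has lost exactly one incident edge, so it witnesses that $(d_1-1,\ldots,d_n-1)$ is graphic. Note that the graphicness of the two sequences forces both $\sum d_i$ and $\sum d_i - n$ to be even, and hence $n$ is even, as is necessary for a perfect matching to exist.

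For the converse, my plan is to produce realizations $G$ of $(d_i)$ and $H$ of $(d_i-1)$ with $E(H)\subseteq E(G)$; if I succeed, then at every vertex $v$ we have $\deg_G(v)-\deg_H(v)=1$, so $F:=E(G)\setminus E(H)$ is a $1$-regular spanning subgraph of $G$, i.e., a perfect matching of $G$. To find such a pair, I would fix any starting realizations $G$ and $H$ and then choose the pair, among all pairs of realizations of the two sequences, so as to maximize $|E(G)\cap E(H)|$.

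Set $F:=E(G)\setminus E(H)$ and $F':=E(H)\setminus E(G)$. A vertex-degree count gives $\deg_F(v)-\deg_{F'}(v)=d_v-(d_v-1)=1$ for every $v$, so $\deg_F(v)\ge 1$ throughout, and in particular $F$ is a perfect matching precisely when $F'=\emptyset$. Suppose for contradiction that $F'\neq\emptyset$ and pick any $ab\in F'$. Because $\deg_F(a)\ge 1$ there is some $x\ne b$ with $ax\in F$. I would then attempt a 2-switch in $H$ that deletes $ab$ and some edge $xy\in E(H)$ and inserts $ax$ and $by$ (preserving all degrees in $H$), and verify that the switch strictly increases $|E(G)\cap E(H)|$, contradicting the extremal choice of $(G,H)$.

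The main obstacle is that the vertex $y$ needed for the switch --- one satisfying $xy\in E(H)$, $by\notin E(H)$, $y\notin\{a,b,x\}$, and additionally $xy\notin E(G)$ or $by\in E(G)$ --- need not exist for the initial choice of $ab$ and $x$. When no direct 2-switch works, every eligible candidate is trapped into a further alternating configuration in $F\cup F'$, which I would then exploit to trace an alternating walk. The expectation is that such a walk must eventually close so as to deliver either a productive single 2-switch or a productive pair of 2-switches; pinning this down cleanly, and possibly imposing a secondary extremal condition (e.g., minimizing a potential like the length of the required alternating walk) to rule out degenerate configurations, is the technical crux of the argument.
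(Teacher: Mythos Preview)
The paper does not give a proof of this theorem; it is stated with a citation to Lov\'asz and used only as background. So there is no paper-proof to compare against here.

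On its own merits: your forward direction is correct and complete. For the converse, the strategy you outline---choose realizations $G$ of $(d_i)$ and $H$ of $(d_i-1)$ maximizing $|E(G)\cap E(H)|$, and show that then $E(H)\subseteq E(G)$---is the standard one (it is essentially how one proves the more general Kundu theorem on $k$-factors as well). But, as you yourself say, the argument is not finished: a single $2$-switch in $H$ may fail to improve the overlap, and your appeal to an ``alternating walk'' is at this point a gesture rather than a proof.

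The gap is real but fillable. A standard completion allows $2$-switches in \emph{either} $G$ or $H$ and runs the alternation in the bicoloured multigraph $G\uplus H$ (red edges from $G$, blue from $H$, a common edge contributing one of each colour). Every vertex has red-degree exceeding blue-degree by exactly one, so a maximal red/blue alternating trail that begins with the offending blue edge $ab\in E(H)\setminus E(G)$ must terminate with a red edge; analysing where and why it terminates produces a $4$-cycle on which a single $2$-switch (in $G$ or in $H$, depending on the case) strictly increases $|E(G)\cap E(H)|$. Until that case analysis is actually carried out, what you have is a plan, not a proof.
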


\begin{theorem}[Erd\H{o}s--Gallai \cite{EG}]\label{EG}
   Let  $n\in \mathbb{N}$ and $d_1 \geq d_2 \geq \dots \geq d_n$ be non-negative integers. The sequence $(d_1, d_2, \ldots, d_n)$ is graphic iff $\sum_{i=1}^n d_i$ is even and for every $k\in[n]$:
    \begin{equation}\label{EGsum}
    \sum_{i=1}^k d_i \leq k(k-1) + \sum_{i=k+1}^n \min\{d_i,k\}.
    \end{equation}
\end{theorem}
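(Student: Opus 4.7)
My plan is to prove the two directions separately: necessity by a direct edge-count, and sufficiency by induction on $n$. For necessity, suppose $G$ realizes $(d_1, \ldots, d_n)$. Then $\sum_{i=1}^n d_i = 2|E(G)|$ is even. Fix $k \in [n]$ and set $S = \{1, \ldots, k\}$. Counting edge-endpoints incident to $S$ gives
\[
\sum_{i=1}^k d_i \;=\; 2\,e(G[S]) + e_G(S, V(G)\setminus S),
\]
where the first term is at most $2\binom{k}{2} = k(k-1)$, while in the second each vertex $i > k$ contributes at most $\min\{d_i, k\}$ edges to $S$ (it has at most $d_i$ neighbors in total, and at most $k$ of those can lie in $S$); this yields \eqref{EGsum}.

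For sufficiency, I would induct on $n$, with a secondary induction on $\sum_{i} d_i$ when reducing. The base case $n=1$ forces $d_1 = 0$, realized by the single-vertex empty graph. For the inductive step, if $d_n = 0$ I delete vertex $n$ and observe that \eqref{EGsum} is inherited by the shorter sequence (LHS unchanged; RHS decreases by at most $\min\{d_n,k\} = 0$), so induction closes the case. Otherwise $d_n \geq 1$; here I would choose some $j < n$ with $d_j \geq 1$, form the reduced sequence $(d_1, \ldots, d_j - 1, \ldots, d_n - 1)$, re-sort, and verify \eqref{EGsum} still holds. By induction the reduced sequence is realized by a graph $H$; using Berge's 2-switch exchange I would arrange $\{j,n\} \notin E(H)$, then add the edge $jn$ to recover a realization of $(d_1, \ldots, d_n)$.

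The main obstacle is choosing $j$ so that \eqref{EGsum} persists at every $k$; a naive choice such as $j=1$ can destroy an already-tight inequality. My plan is to identify the largest index $k^*$ at which \eqref{EGsum} is saturated, then pick $j > k^*$ whenever possible (so the LHS for $k \leq k^*$ is undisturbed), and otherwise take $j \leq k^*$ with $d_j$ tied to a later term so that the reordering creates no new violations. Verifying \eqref{EGsum} for the reduced sequence then splits into the cases $k < j$, $k = j$, and $k > j$, each handled by comparing with the original inequality and exploiting the saturation structure. A secondary subtlety is arranging the 2-switch so that $\{j,n\}$ can be taken as a non-edge of $H$, which may further constrain $j$ when the degrees $d_j, d_n$ are large. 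I expect this combinatorial case analysis, combined with the 2-switch step, to be the bulk of the technical work; once it is in hand, the induction closes cleanly.
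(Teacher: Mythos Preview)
The paper does not prove Theorem~\ref{EG}; it is quoted as a classical result of Erd\H{o}s and Gallai and used as background. So there is no ``paper's own proof'' to compare against, only your proposal to assess.

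Your necessity argument is correct and standard. For sufficiency, what you have written is a plan rather than a proof: you explicitly identify the choice of the index $j$ (so that \eqref{EGsum} survives the reduction at every $k$) as ``the main obstacle'' and then defer it to an unspecified case analysis based on the largest saturated index $k^*$. That heuristic is not obviously sound---for instance, when \emph{every} $k\le n-1$ is saturated (as happens for sequences such as $(n-1,\ldots,n-1)$), there is no $j>k^*$ to pick, and your fallback (``take $j\le k^*$ with $d_j$ tied to a later term'') is too vague to verify. Always reducing $d_n$ also forces a re-sort whenever $d_{n-1}=d_n$, which scrambles the index you need to track for the 2-switch step. So as written there is a genuine gap.

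The fix the paper implicitly points you toward is Choudum's argument (referenced as the inspiration for the proof of Theorem~\ref{t.SpecialMatch}): rather than always hitting $d_n$, one subtracts $1$ from $d_t$ and $d_p$, where $p$ is the largest index with $d_p\ge 1$ and $t<p$ is the largest index with $d_t>d_{t+1}$ (or $t=p-1$ if the sequence is constant up to $p$). This choice keeps the sequence weakly decreasing without re-sorting, and the verification that \eqref{EGsum} persists reduces to a short case split on whether $t,p\le k$ or $t,p>k$ (the mixed case uses that both sides drop by~$1$). The 2-switch step then goes through because $d_t\le p-1$, exactly as in Claims~\ref{tp}--\ref{badk} of the paper's proof of Theorem~\ref{t.SpecialMatch}. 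If you rewrite your induction with this choice of $(t,p)$ in place of $(j,n)$, the sketch becomes a proof.
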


The above theorems tell us when $(d_1, d_2, \ldots, d_n)$ can realize a perfect matching, but they do not say anything about \emph{which} perfect matchings are realized in the labelled graph. Given the complete graph $K_n$ on the vertex set $[n]$ (with $n$ even) we denote by $\family$ the set of all perfect matchings in $K_n$. We use the notation $(i, j)$ to refer to the edge between vertex $i$ and vertex $j$, so in particular  the following are two different elements of $\family$:
\begin{eqnarray*}
    M^-:&=&\{(1, n), (2, n-1), \ldots, (n/2, n/2+1)\};\\
     M^+:&=&\{(1, 2), (3, 4), \ldots, (n-1, n)\}.
\end{eqnarray*}
Given a graph $G$ on the vertex set $[n]$, we may ask if a particular $M\in \family$ exists in $G$. We may also ask when $(d_1, d_2, \ldots, d_n)$ \emph{can realize some particular} $M\in\family$, that is, when there exists a graph $G$ with $V(G)=[n]$, $\deg_G(i)=d_i$, and $G$ contains the perfect matching $M$.

In this paper we find that the perfect matchings $M^{-}$ and $M^{+}$ are extremal in terms of being realizable.

\begin{theorem}\label{t.extremal} Let  $n\in \mathbb{N}$ and $d_1 \geq d_2 \geq \dots \geq d_n\geq 1$ be integers.
\begin{enumerate}
\item[(a)] The sequence $(d_1, d_2, \ldots, d_n)$ can realize a perfect matching iff it can realize $M^{-}$.
\item[(b)] The sequence $(d_1, d_2, \ldots, d_n)$ can realize any $M\in\family$ iff it can realize $M^+$.
\end{enumerate}
\end{theorem}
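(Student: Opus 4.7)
Both ``if'' directions are immediate. For ``only if,'' I use the following reduction: $(d_1, \ldots, d_n)$ realizes $M \in \family$ if and only if $(d_1 - 1, \ldots, d_n - 1)$ admits a realization $H$ on $[n]$ with $E(H) \cap M = \emptyset$ (indeed $G = H \cup M$ works, and conversely $H = G - M$). This reduces both parts to questions about which matchings can be avoided by realizations of the residual sequence $d-1$.

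\textbf{Part (a).} By Theorem~\ref{Lov} the hypothesis gives that $d - 1$ is graphic. I select a realization $H$ of $d - 1$ minimizing $|E(H) \cap M^-|$ and claim this minimum is $0$. Otherwise, fix $e = (k, n+1-k) \in E(H) \cap M^-$. Any $2$-switch pairing $e$ with an edge $(a, b) \in E(H)$ disjoint from $\{k, n+1-k\}$ can only create new edges of the form $(k, \cdot)$ or $(n+1-k, \cdot)$ whose other endpoint is in $\{a, b\}$, and such edges cannot lie in $M^-$ (since $(k, x) \in M^-$ forces $x = n+1-k$, excluded). Therefore any valid $2$-switch strictly reduces $|E(H) \cap M^-|$, contradicting minimality. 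Existence of a valid $2$-switch follows from partitioning $V \setminus \{k, n+1-k\}$ according to adjacency with $k$ and $n+1-k$ in $H$ (common neighbors $A$, exclusive neighbors $B, C$, non-neighbors $D$) and verifying that the swap works whenever $(a, b)$ has one endpoint in $B \cup D$ and the other in $C \cup D$ with neither in $A$; a counting argument using $|A| + |B| = d_k - 2$, $|A| + |C| = d_{n+1-k} - 2$, and $\deg_H(v) \leq n-2$ guarantees such an edge.

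\textbf{Part (b) and the main obstacle.} Assume $d$ realizes $M^+$, so some realization $H_0$ of $d-1$ avoids $M^+$. For arbitrary $M \in \family$, I want a realization of $d-1$ avoiding $M$. The naive ``minimize $|E(H) \cap M|$ and $2$-switch down'' argument fails here: small examples (e.g.\ $(d_1, \ldots, d_4) = (3, 3, 2, 2)$) exhibit realizations of $d-1$ that contain forbidden edges but admit no disjoint edge to pair with for a single $2$-switch. My plan instead is to connect $M^+$ to $M$ by a sequence of elementary matching swaps (at each step, replacing two edges $(a, b), (c, d)$ of the current matching with $(a, c), (b, d)$) and to maintain, inductively along this chain, a realization of $d-1$ avoiding the current matching. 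The main obstacle is the inductive step: transitioning from a realization avoiding $M_j$ to one avoiding $M_{j+1}$ may require a composite sequence of $2$-switches that carefully avoids re-introducing any shared matching edges. I expect this step to rely on the Erd\H{o}s--Gallai-style characterization of $M^+$-realizability advertised as the paper's main theorem; in particular, its inequalities, once proven, should also imply the corresponding conditions for realizing any $M \in \family$.
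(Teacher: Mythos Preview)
Your proposal leaves part (b) genuinely open: you correctly identify the inductive step along a chain of elementary matching swaps as the crux, but you do not prove it, and your expectation that this step needs the Erd\H{o}s--Gallai-style characterization (Theorem~\ref{t.SpecialMatch}) is mistaken. The paper handles both (a) and (b) uniformly via a single lemma (Lemma~\ref{l.monovariant}(b)): if $N$ is obtained from $M$ by one swap on vertices $w<x<y<z$, then any $(d_1,\dots,d_n)$ realizing $M$ also realizes $N$. The proof works directly with a realization $G\supseteq M$ (not with your complementary $H=G-M$): if $G$ already contains both edges of $N\setminus M$, done; if neither, alternate on the $4$-cycle; if exactly one, say $(w,y)\in E(G)$ but $(x,z)\notin E(G)$, then from $x\not\sim z\sim y$ and $d_x\geq d_y$ one finds $q$ with $x\sim q\not\sim y$ and switches along $xzyqx$. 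The removed edges cannot lie in $N$ because $N$ is a matching. This uses only the monotonicity $d_i\geq d_j$ for $i<j$ and no part of Theorem~\ref{t.SpecialMatch}. Once this lemma is in hand, (a) and (b) follow by showing that $M^-$ is reachable from any $M$ by switches, and any $M$ is reachable from $M^+$ by switches; the paper does this via a potential $\phi(M)=\sum_{(u,v)\in M}2^{u+v}$, which strictly decreases under each switch.

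Your separate treatment of (a) takes a different route (minimize $|E(H)\cap M^-|$ over realizations $H$ of $d-1$), but the ``counting argument'' you invoke to produce a valid $2$-switch partner for $e=(k,n{+}1{-}k)$ is not carried out. In particular you do not exclude the possibility that every edge of $H$ meets $\{k,n{+}1{-}k\}$, in which case there is no disjoint edge to pair with $e$ at all; the three facts you list ($|A|+|B|=d_k-2$, $|A|+|C|=d_{n+1-k}-2$, $\deg_H\le n-2$) do not by themselves force an edge between $B\cup D$ and $C\cup D$. This may be patchable, but the paper's switch lemma makes a separate argument for (a) unnecessary.
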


Theorems \ref{Lov} and \ref{EG} together provide a characterization of when (a) occurs. Our main result in this paper is the following extension of this earlier work of Lov\'asz and Erd\H{o}s--Gallai, which characterizes when (b) occurs.

\begin{theorem}\label{t.SpecialMatch} Let  $n\in \mathbb{N}$ and $d_1 \geq d_2 \geq \dots \geq d_n\geq 1$ be integers. The sequence $(d_1, d_2, \ldots, d_n)$ can realize $M^+$ iff $\sum_{i=1}^n d_i$ is even, $n$ is even, and for every $k\in[n]$:
\begin{equation}\label{EGsumMatch}\tag{*}
\mathop{\sum_{i=1}^k} d_i \leq \begin{cases}
      k(k-1)+\mathop{\sum_{i=k+1}^n} \min\{d_i-1, k\} & \textrm{if $k$ is even}\\
      k(k-1)+\min\{d_{k+1}, k\}+\mathop{\sum_{i=k+2}^n} \min\{d_i-1, k\} & \textrm{if $k$ is odd}.
   \end{cases}
\end{equation}
\end{theorem}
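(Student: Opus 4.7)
\emph{Necessity.} Let $G$ realize $(d_1,\ldots,d_n)$ with $M^+\subseteq E(G)$, fix $k$ and $S=\{1,\ldots,k\}$. Double-counting gives $\sum_{i\in S}d_i=2|E(G[S])|+|E_G(S,\bar S)|\leq k(k-1)+\sum_{j\in\bar S}x_j$, where $x_j$ counts $j$'s neighbours in $S$. Because $M^+\subseteq E(G)$, whenever $j\in\bar S$ has its matching partner also in $\bar S$, one of $j$'s $d_j$ edges is spent internally and so $x_j\leq\min\{d_j-1,k\}$. This covers every $j\in\bar S$ when $k$ is even; when $k$ is odd it covers $j\geq k+2$, while $j=k+1$ has its partner $k\in S$ and only gives $x_{k+1}\leq\min\{d_{k+1},k\}$. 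Summing reproduces~(\ref{EGsumMatch}), and the parity conditions are automatic.

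\emph{Sufficiency.} The problem is equivalent to realizing $(d_1-1,\ldots,d_n-1)$ as a graph $H$ on $[n]$ edge-disjoint from $M^+$---i.e.\ inside the host $K_n\setminus M^+$---since then $G=H\cup M^+$ supplies the required realization. In this light~(\ref{EGsumMatch}) is exactly the Erd\H{o}s--Gallai inequality for $(d_i-1)$ inside $K_n\setminus M^+$ tested on the prefix $S=\{1,\ldots,k\}$: the $k/2$ forbidden matching pairs inside $S$ (when $k$ is even), or $(k-1)/2$ inside plus one straddling pair (when $k$ is odd), account for the two cases. Sufficiency is then the assertion that, as in the classical Erd\H{o}s--Gallai theorem, these prefix inequalities alone force realizability. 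My plan is an extremal 2-switch argument: pick $G$ realizing $(d_i)$ with $m(G):=|E(G)\cap M^+|$ maximum and assume for contradiction $m(G)<n/2$. Fix $e=(2i-1,2i)\in M^+\setminus E(G)$, set $A=N_G(2i-1)$ and $B=N_G(2i)$ (so $|A|=d_{2i-1}$, $|B|=d_{2i}$). The 2-switch deleting $(2i-1)a,(2i)b$ and inserting $(2i-1)(2i),ab$---for $a\in A$, $b\in B$, $a\neq b$, $ab\notin E(G)$---preserves degrees and, crucially, destroys no edge of $E(G)\cap M^+$: the only $M^+$-edge at $2i-1$ is $e$ itself, and $a\neq 2i$ forces $(2i-1)a\notin M^+$, similarly at $2i$; meanwhile $e$ is added. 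This strictly increases $m(G)$, contradicting maximality---unless no such $(a,b)$ exists, i.e.\ unless the \emph{saturation} condition $ab\in E(G)$ holds for every $a\in A$, $b\in B$ with $a\neq b$.

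\emph{Main obstacle.} Converting saturation into a violation of~(\ref{EGsumMatch}) is the heart of the proof. Saturation forces $A\cap B$ to induce a clique in $G$ and forces complete bipartite connections between the three blocks $A\setminus B$, $B\setminus A$, $A\cap B$; consequently $d_v\geq|A\cup B|+1$ for $v\in A\cap B$ and $d_v\geq d_{2i}+1$ for $v\in A\setminus B$ (symmetrically on the $B$ side). Using the decreasing ordering of $(d_i)$ one expects the saturated block $A\cup B\cup\{2i-1,2i\}$ to sit at the top of the ordering as $\{1,\ldots,k\}$ for $k=|A\cup B|+2$; substituting the forced edges and degree bounds into~(\ref{EGsumMatch}) at this $k$ (the parity of $k$ selecting the correct case) should exceed the right-hand side and deliver the contradiction. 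Two technical hurdles I anticipate: (i) arranging that the saturated block really is the top-$k$ prefix rather than being interleaved with higher-degree vertices outside $A\cup B$, which may require first permuting labels within equal-degree classes while preserving $M^+$; and (ii) handling the low-degree regime (e.g.\ $d_{2i-1}=d_{2i}=1$) in which direct 2-switches are rarely available, so one must combine several switches along an alternating path that temporarily breaks and rebuilds other matching edges with net gain in $m$. These matching-preserving longer rearrangements, rather than the bookkeeping of necessity, are where I expect the bulk of the work to lie.
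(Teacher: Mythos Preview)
Your necessity argument is correct and coincides with the paper's. The sufficiency plan, however, diverges sharply from the paper and contains a genuine gap.

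The paper proves sufficiency by Choudum-style induction on $\sum d_i$: one picks indices $t$ (largest with $d_t > d_{t+1}$, roughly) and $p$ (largest with $d_p \geq 2$), reduces to the sequence with $d_t, d_p$ each lowered by $1$, and verifies through a chain of claims that (\ref{EGsumMatch}) persists for the reduced sequence---except in a handful of boundary configurations where an explicit realization is built by hand. Your extremal 2-switch approach is structurally different and, if it worked, would be more transparent.

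The gap is precisely where you place your ``main obstacle''. The proposed value $k = |A \cup B| + 2$ does not in general produce a violation of (\ref{EGsumMatch}). Take $n=4$ and $(d_1,d_2,d_3,d_4)=(2,2,1,1)$: the max-$m$ realization has $E(G)=\{(1,2),(1,3),(2,4)\}$, the missing matching edge is $(3,4)$, one finds $A=\{1\}$, $B=\{2\}$, saturation holds, yet your $k=4$ gives $6 \le 12$---no contradiction. The actual failure of (\ref{EGsumMatch}) occurs at $k=1$ or $k=2$, i.e.\ at $|A\cup B|$ rather than $|A\cup B|+2$, and the saturated block $A\cup B\cup\{2i-1,2i\}$ being the whole vertex set carries no information. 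Your hurdle~(i)---permuting to make the block a prefix---cannot repair this, since the block already \emph{is} the prefix $\{1,\dots,4\}$; the choice of $k$ is simply wrong. The suggestion to ``permute labels within equal-degree classes while preserving $M^+$'' is also unclear as stated: such permutations change which edges of $G$ lie in $M^+$, hence can change $m(G)$ and destroy the saturation hypothesis you have just set up. Hurdle~(ii) is a red herring: when $d_{2i-1}=d_{2i}=1$ you still land in the saturated case and must run the (\ref{EGsumMatch})-violation argument, not an alternating-path detour.

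In short, the implication ``saturation $\Rightarrow$ (\ref{EGsumMatch}) fails at an identifiable $k$'' is the entire content of sufficiency in your scheme, and no workable mechanism for it is proposed. The paper's inductive route sidesteps any global structural analysis of a hypothetical max-$m$ realization; its cost is several delicate parity and tightness claims together with three ad~hoc constructions for corner cases, which suggests the problem resists a single structural punchline.
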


Our proof of Theorem \ref{t.SpecialMatch} is inductive (in particular, it is inspired by a proof of Theorem \ref{EG} due to Choudum \cite{Ch}) and leads to a polynomial-time algorithm for constructing a realization of $M^+$. Our argument for Theorem \ref{t.extremal}(b) also provides a polynomial-time algorithm that transforms one realization to another. So, meeting the conditions of Theorem \ref{t.SpecialMatch} means that not only does a realization exist for any $M\in\family$, but that we can construct it in polynomial time.

As a corollary to Theorem \ref{t.SpecialMatch}, we can obtain the following sufficient condition for realizing $M^+$.

\begin{corollary}\label{corMatch} Let $(d_1, d_2, \ldots, d_n)$ be a graphic sequence, with $n$ an even integer and $d_n\ge n/2$.
If $\sum_{i=1}^{n/2}d_i \le (\sqrt{2-(4/n)}-0.5) \tfrac{n^2}{2}$,
 then the sequence $(d_1, d_2, \ldots, d_n)$ can realize $\mmax$.
\end{corollary}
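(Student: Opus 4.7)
The plan is to verify every hypothesis of Theorem~\ref{t.SpecialMatch}. Since $n$ is given to be even and $\sum d_i$ is automatically even for a graphic sequence, everything reduces to checking the family of inequalities (*) for $k \in [n]$. The assumption $d_n \geq n/2$ forces $d_i \geq n/2$ for every $i$, and it is natural to split into the cases $k < n/2$ and $k \geq n/2$.

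For $k < n/2$ the work is cheap: since $d_i \geq n/2 > k$ and $d_i - 1 \geq n/2 - 1 \geq k$ for every $i > k$, each of $\min\{d_i, k\}$, $\min\{d_i - 1, k\}$, and $\min\{d_{k+1}, k\}$ appearing in either (*) or in the Erd\H{o}s--Gallai bound collapses to $k$. Both the even and odd branches of the right-hand side of (*) then reduce to $k(k-1) + (n-k)k = k(n-1)$, which also coincides with the Erd\H{o}s--Gallai bound. Hence (*) follows immediately from Theorem~\ref{EG}.

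For $k \geq n/2$, I would bound both sides of (*). On the right, $d_i \geq n/2$ gives $\min\{d_i - 1, k\} \geq n/2 - 1$, so in both subcases the right-hand side of (*) is at least $k(k-1) + (n-k)(n/2 - 1)$. On the left, writing $\sigma = \sum_{i=1}^{n/2} d_i$, monotonicity yields $d_{n/2} \leq 2\sigma/n$ and hence $d_i \leq 2\sigma/n$ for every $i \geq n/2$, so
\[
\sum_{i=1}^k d_i \;\leq\; \sigma + (k - n/2)\cdot \tfrac{2\sigma}{n} \;=\; \tfrac{2k\sigma}{n}.
\]
Combining, it suffices to prove
\[
\sigma \;\leq\; \frac{n}{2}\left[(k-1) + \frac{(n-k)(n/2 - 1)}{k}\right] \quad \text{for every integer } k \geq n/2.
\]

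The main obstacle---and the reason the coefficient in the hypothesis takes exactly the form it does---is to minimize this right-hand side over $k$. The bracketed expression is a strictly convex function of $k > 0$, so its minimum on the real interval $[n/2, n]$ is attained at the critical point $k^* = \sqrt{n(n-2)/2}$, which lies in $[n/2, n]$ for all $n \geq 4$. A short calculation then shows that the minimum value is exactly $\bigl(\sqrt{2 - 4/n} - 1/2\bigr)\tfrac{n^2}{2}$, matching the hypothesis precisely. (The case $n = 2$ is vacuous, since the hypothesis then forces $\sigma \leq -1$.) With (*) verified for every $k$, Theorem~\ref{t.SpecialMatch} delivers the desired realization of $M^+$.
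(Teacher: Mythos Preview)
Your proof is correct and follows essentially the same approach as the paper: split into $k\le n/2-1$ (where the right-hand side of (*) collapses to $k(n-1)$) and $k\ge n/2$ (where one bounds the left by $k\bar d$ with $\bar d=2\sigma/n$ and the right below by $k(k-1)+(n-k)(n/2-1)$, then optimizes over $k$). The only cosmetic differences are that for small $k$ you invoke Erd\H{o}s--Gallai rather than the trivial bound $d_i\le n-1$, and for the optimization you minimize the convex function $k+\tfrac{n(n-2)}{2k}-\tfrac{n}{2}$ directly, whereas the paper rewrites the same inequality as the quadratic $f(k)=k^2-(\bar d+n/2)k+n(n/2-1)\ge 0$ and applies the quadratic formula at the vertex; these are equivalent computations yielding the same threshold.
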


Recall that every $n$-vertex graph $G$ with $\delta(G)\geq \tfrac{n}{2}$ and $n$ even has a perfect matching (e.g. apply Driac's Theorem to get a Hamilton cycle and then take every second edge in the cycle). When $n$ is large, the extra condition in Corollary \ref{corMatch} is approximately $\sum_{i=1}^{n/2}d_i \le (.9)\tfrac{n^2}{2}$, i.e., we want the first half of the degree sum to be no more than about 90\% of what it could possibly be (if all these degrees were $n-1$). This bound is asymptotically best possible: we will later demonstrate a degree sequence which just barely fails this condition but cannot realize $\mmax$.

Two graphic sequences $(d_1^1, \ldots, d_n^1)$ and $(d_1^2, \ldots, d_n^2)$ are said to \emph{pack} if there are edge-disjoint graphs $G_1$ and $G_2$
on the same vertex set $[n]$ such that $\deg_{G_j}(i)=d_i^j$ for all $i\in [n]$ and $j\in[2]$.  Two $n$-vertex graphs $G_1$  and $G_2$ \emph{pack} if they can be expressed as edge-disjoint subgraphs of the complete graph $K_n$. In 1978, Sauer and Spencer~\cite{MR516262} established that $n$-vertex graphs  $G_1$ and $G_2$  pack if $\Delta(G_1)\Delta(G_2)<n/2$.
We are able to fully verify the following conjecture, establishing a degree sequence analog of the Sauer-Spencer packing theorem.

\begin{conjecture}[Yin \cite{Yin}; Busch, Ferrera, Hartke, Jacobsen, Kaul, and West~\cite{busch2012packing}]\label{pack}
   Let $n\ge 3$ be an  integer, and let $(d_1^1, \ldots, d_n^1)$ and $(d_1^2, \ldots, d_n^2)$
   be two graphic sequences with $d_n^1, d_n^2\ge 1$.  If $d_1^1 d_1^2  <\frac{n}{2}$, then $(d_1^1, \ldots, d_n^1)$ and $(d_1^2, \ldots, d_n^2)$  pack.
\end{conjecture}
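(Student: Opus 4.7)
My plan is to adapt the classical proof of the Sauer--Spencer packing theorem to the degree-sequence setting, replacing the vertex permutations of that argument with degree-preserving $2$-switches.

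I would begin by fixing correctly labeled realizations $G_1$ and $G_2$ of the two sequences on $[n]$, meaning $\deg_{G_j}(i)=d_i^j$ for each $i$ and $j\in\{1,2\}$, and choose them so as to minimize the overlap $|E(G_1)\cap E(G_2)|$. Suppose for contradiction that this overlap is nonempty, with conflict edge $ab$. The goal is to exhibit a $2$-switch in $G_1$ or $G_2$ that removes $ab$ from one of them while introducing no new edge of the other graph, contradicting minimality.

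A $2$-switch in $G_2$ takes an edge pair $\{ab,cd\}\subseteq E(G_2)$ with $\{c,d\}\cap\{a,b\}=\emptyset$ and replaces it by $\{ac,bd\}$ or $\{ad,bc\}$, preserving every vertex degree and hence the labeled sequence. For such a swap to strictly reduce the overlap, the two new edges must lie outside both $E(G_2)$ (legality of the switch) and $E(G_1)$ (no new conflict). I would count the candidate edges $cd\in E(G_2)$ with endpoints outside $\{a,b\}$, of which there are at least $|E(G_2)|-d_a^2-d_b^2+1$, and then bound the forbidden configurations: the number of vertices $c$ with $ac\in E(G_1)\cup E(G_2)$ is at most $d_a^1+d_a^2$, and each such $c$ contributes at most $d_1^2$ excluded candidate edges via its $G_2$-neighborhood. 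The symmetric bound applies to $d$, and the hypothesis $d_1^1 d_1^2<n/2$ is exactly what is needed to make the candidate count strictly exceed the forbidden one.

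The hard part is that a naive tally leaves a constant-factor gap against the tight threshold $n/2$, so to close it I would (i) take advantage of both orientations $\{ac,bd\}$ and $\{ad,bc\}$ of the switch, (ii) consider a symmetric $2$-switch in $G_1$ as an alternative to one in $G_2$, and (iii) perform a case split on the combined degree of $\{a,b\}$ across the two graphs. The most delicate subcase is where every admissible $2$-switch is blocked due to high-degree vertices clustering near $\{a,b\}$; I would handle it by showing that the forced structural configuration (a small set of vertices absorbing most of the edges in one of the two graphs) contradicts the Erd\H{o}s--Gallai inequality (Theorem \ref{EG}) applied to that sequence. Once a valid $2$-switch is produced in either graph, the contradiction with minimality completes the proof.
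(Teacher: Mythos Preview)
Your proposal outlines a plausible strategy, but it is a plan rather than a proof, and the gap you yourself flag is real and left unresolved. The naive $2$-switch count does fall short of the threshold $n/2$ by a constant factor, and your proposed remedies (i)--(iii) together with an unspecified appeal to Erd\H{o}s--Gallai in a ``delicate subcase'' are not substantiated: you give no indication of what the case split actually is, what structural configuration is forced in the hard subcase, or which Erd\H{o}s--Gallai inequality would be violated there. Moreover, you never say where the hypothesis $d_n^1, d_n^2 \geq 1$ enters your argument; since the statement is false without it (this is exactly Yin's counterexample to the original Busch et al.\ formulation), any correct proof must use it, and a Sauer--Spencer style overlap count does not obviously see minimum degree at all.

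The paper's proof takes an entirely different route and sidesteps these difficulties. Fix any realization $G$ of $(d_1^1,\dots,d_n^1)$; it then suffices to find, inside $\overline{G}$, a spanning subgraph in which vertex $i$ has degree exactly $d_i^2$, i.e.\ an $f$-factor of $\overline{G}$ with $f(v_i)=d_i^2$. The hypothesis $d_1^1 d_1^2 < n/2$ gives $\delta(\overline{G}) > n-1-\tfrac{n}{2d_1^2}$, which is strong enough to bound the binding number of $\overline{G}$ from below, and the Kano--Tokushige $f$-factor theorem (applied with $a=1$, $b=d_1^2$) finishes directly. Here the condition $d_n^2 \geq 1$ is precisely what makes $f$ take values in $[a,b]=[1,d_1^2]$, so the role of that hypothesis is transparent. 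This argument is short and structural at the cost of importing a nontrivial black box; your approach, if it could be pushed through, would be more self-contained, but reaching the sharp constant $n/2$ by switch-counting appears to require substantially more than what you have sketched.
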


Conjecture \ref{pack} was first posed by Busch, Ferrera, Hartke, Jacobsen, Kaul, and West ~\cite{busch2012packing} with  $d_n^1+ d_n^2\ge 1$ in place of
$d_n^1, d_n^2\ge 1$. Yin \cite{Yin} disproved this initial version, but proposed replacing $d_n^1+ d_n^2\ge 1$
with $d_n^1, d_n^2\ge 1$, which we can now say is indeed sufficient.

Our paper proceeds as follows. We prove Theorem \ref{t.SpecialMatch} in Section 2. Corollary \ref{corMatch} is proved in Section 3, where the above-mentioned tightness example is also given. We confirm Conjecture \ref{pack} in Section 4. Note that this confirmation requires a result on the  \emph{binding number} of a graph by Kang and Tokushige \cite{KT}, which will be discussed in Section 4. Section 5 contains a proof of Theorem \ref{t.extremal}, which we handle using the language of posets. We provide detailed examples for the family $\family$ in this section, and make a poset conjecture about the family. In the 6th and final section of this paper we conjecture a generalization of our main result, Theorem \ref{t.SpecialMatch}, from the realm of perfect matchings to that of $h$-factors (spanning $h$-regular subgraphs) for any  $h\in \mathbb{N}$. We show that a direct analog of Theorem \ref{t.extremal} for $h$-factors is false. On the other hand, we show that if our $h$-factor conjecture is true, then it implies a characterization for packing $h$ disjoint perfect matchings.

\section{Proof of Theorem \ref{t.SpecialMatch}}

For convenience we restate our main theorem again here.

\setcounter{theorem}{3}
\begin{theorem} Let  $n\in \mathbb{N}$ and $d_1 \geq d_2 \geq \dots \geq d_n\geq 1$ be integers. The sequence $(d_1, d_2, \ldots, d_n)$ can realize $M^+$ iff $\sum_{i=1}^n d_i$ is even, $n$ is even, and for every $k\in[n]$:
\begin{equation}\label{EGsumMatch}\tag{*}
\mathop{\sum_{i=1}^k} d_i \leq \begin{cases}
      k(k-1)+\mathop{\sum_{i=k+1}^n} \min\{d_i-1, k\} & \textrm{if $k$ is even}\\
      k(k-1)+\min\{d_{k+1}, k\}+\mathop{\sum_{i=k+2}^n} \min\{d_i-1, k\} & \textrm{if $k$ is odd}.
   \end{cases}
\end{equation}
\end{theorem}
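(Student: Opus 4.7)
Necessity is an edge-counting argument: for $G$ realizing $(d_i)$ with $M^+ \subseteq E(G)$ and any $k \in [n]$, I would split
\[
\sum_{i=1}^{k} d_i \;=\; 2\, e\bigl(G[\{1,\ldots,k\}]\bigr) \;+\; e_G\bigl(\{1,\ldots,k\},\{k+1,\ldots,n\}\bigr),
\]
bound the interior term by $k(k-1)$, and for each $i>k$ note that $|N_G(i) \cap \{1,\ldots,k\}| \leq \min\{d_i, k\}$ with the stronger bound $\min\{d_i - 1, k\}$ whenever $i$'s $M^+$-partner $m(i)$ lies in $\{k+1,\ldots,n\}$ (so the matching edge at $i$ does not cross the cut). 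A direct check shows $m(i) \in \{1,\ldots,k\}$ with $i>k$ occurs precisely when $k$ is odd and $i = k+1$, producing exactly the two parity cases of \eqref{EGsumMatch}.

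For sufficiency I plan to induct on $\sigma := \sum_i d_i$, inspired by Choudum's proof of Theorem~\ref{EG}. The base case is $\sigma = n$ (forcing $d_i \equiv 1$), where the realization is $G = M^+$ itself. For the inductive step, given $(d_1 \geq \cdots \geq d_n)$ satisfying \eqref{EGsumMatch} with $\sigma > n$, I aim to find an unordered pair $\{u, v\} \in \binom{[n]}{2} \setminus M^+$ such that the decremented sequence (obtained by lowering $d_u$ and $d_v$ each by $1$ and re-sorting in a way compatible with $M^+$) still satisfies \eqref{EGsumMatch}. Induction then supplies a graph $G'$ realizing the decremented sequence and containing $M^+$; a standard two-edge swap in $G'$ can be used to ensure $(u,v) \notin E(G')$, after which $G := G' + (u,v)$ is the desired realization of $(d_i)$.

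The main obstacle will be the selection of $\{u,v\}$ together with the verification that \eqref{EGsumMatch} is preserved under the decrement. I expect to use a Choudum-type pivot (with $u$ at the top of a plateau of equal degrees and $v$ near the bottom of the sequence), but the casework essentially doubles compared to Choudum's argument because of the parity split in \eqref{EGsumMatch} and the additional $\min\{d_{k+1}, k\}$ summand for odd $k$. The delicate situations will be those values of $k$ at which \eqref{EGsumMatch} holds with equality: there the decrement can violate \eqref{EGsumMatch} unless $u,v$ both lie on the same side of $k$, forcing a more refined pivot rule (and additional care at the boundary index $k+1$ when $k$ is odd). A further subtlety is that the re-sorting after the decrement must respect the matching: label swaps within a pair $\{2j-1,2j\}$ are free, but cross-pair swaps would carry $M^+$ along with them, so the reorder is slightly more delicate than in the Erd\H{o}s--Gallai proof. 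I anticipate that carefully pinning down the ``right'' pivot across all parity/plateau cases will constitute the bulk of the proof, and that once this is done the polynomial-time algorithm advertised after the theorem falls out automatically, since each step takes polynomial time and the recursion depth is at most $\sigma = O(n^2)$.
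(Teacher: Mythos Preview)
Your necessity argument is fine and matches the paper's (they phrase it as applying Erd\H{o}s--Gallai to $G$ with the matching edges among $\{k+1,\dots,n\}$ deleted, which is exactly your edge count).

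For sufficiency, however, there is a genuine gap: the purely inductive scheme you describe cannot always be carried out, because there are sequences satisfying \eqref{EGsumMatch} for which \emph{no} admissible decrement exists. Take $n=6$ and $(d_1,\dots,d_6)=(2,2,2,2,1,1)$; here \eqref{EGsumMatch} holds (with equality at $k=2$). Any pair $\{u,v\}$ with $d_u,d_v\ge 2$ lies inside $\{1,2,3,4\}$. If $\{u,v\}$ is one of the $M^+$-pairs $\{1,2\}$ or $\{3,4\}$ you have excluded it by fiat; if $\{u,v\}$ crosses the two pairs, the decremented multiset is $\{2,2,1,1,1,1\}$ but the two surviving $2$'s sit in different $M^+$-pairs, so no relabelling in the $M^+$-stabilizer $S_2\wr S_{n/2}$ can sort them into positions $1,2$. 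And even if you drop the restriction $\{u,v\}\notin M^+$, the only sorted decrement is $(2,2,1,1,1,1)$, which \emph{fails} \eqref{EGsumMatch} at $k=2$ (and indeed cannot realize $M^+$ at all). So there is no ``more refined pivot rule'' that rescues the induction here.

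The paper's proof follows your Choudum-style outline but does not claim the induction always goes through. It fixes the specific pivot $(t,p)$ (largest index $p$ with $d_p\ge 2$, and $t$ the last strict drop before $p$), and then shows that if the decremented sequence $\pi'$ violates \eqref{EGsumMatch} at some $k$, the original sequence is pinned down to one of a short list of explicit shapes (e.g.\ $(k,\dots,k,2,1,\dots,1)$ with $k+1$ leading $k$'s and $k$ even, or $(k{+}1,\dots,k{+}1,2,1,\dots,1)$, etc.), for each of which a realization of $M^+$ is written down by hand. Your plan needs the same fallback: isolate the tight configurations where every legal decrement breaks \eqref{EGsumMatch}, and construct $G$ directly there. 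A secondary point: the ``standard two-edge swap'' to free up $(u,v)$ in $G'$ also needs an argument that the swap can avoid deleting an $M^+$-edge; the paper handles this in its Claim~\ref{badk} by exhibiting a second choice of swap partner when the first one is bad.
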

\setcounter{theorem}{6}

\begin{proof}
Suppose first that $(d_1, \ldots, d_n)$ realizes $\mmax$ via the graph $G$. Since $G$ is graphic we must have $\sum_{i=1}^n d_i$ is even, and since $G$ has a 1-factor $n$ must be even. Consider the graph $G'$ obtained from $G$ by deleting the all the edges $(k+1, k+2), \ldots, (n-1, n)$ (if $k$ is even), or by deleting all the edges $(k+2, k+3), \ldots, (n-1, n)$ (if $k$ odd). Then $G'$ must satisfy condition (\ref{EGsum}) of Theorem \ref{EG} for $k$. For both $k$ even and $k$ odd, this amounts precisely to (\ref{EGsumMatch}).

We proceed with the backwards direction of our proof by induction on $\sum_{i=1}^n d_i$. Since $d_n\geq 1$ this sum is at least $n$. If it is equal to $n$, then the desired 1-factor is realized by the graph consisting of just the 1-factor itself. So we may assume that $\sum_{i=1}^n d_i>n$. Since $n$ is even and $\sum_{i=1}^n d_i$ is also even, we in fact get that $\sum_{i=1}^n d_i\geq n+2$. This extra $+2$ must be split among at least two $d_i$'s: if not, our sequence is $(d_1, 1, \ldots, 1)$ and applying (\ref{EGsumMatch}) with $k=1$ tells us that $d_1\leq 0+1+0$, a contradiction.

Let $p$ be the largest integer such that $d_p\geq 2$.
If $d_1=d_2=\cdots=d_p$ then $p\geq 2$ and we may let $t=p-1$. Otherwise,
at least one $t<p$ has $d_t > d_{t+1}$, so let $t$ be the largest such.
Now consider the following sequence:
$$(d_1, \ldots, d_{t-1}, d_{t}-1, d_{t+1}, \ldots, d_{p-1}, d_p -1, d_{p+1}, \ldots, d_n).$$
Note that by our choice of $t$ and $p$, $$d_1 \geq \cdots \geq d_{t-1} >  d_{t}-1 \geq  \underline{d_{t+1} =  \cdots = d_{p-1} >} d_p-1\geq d_{p+1}\geq \cdots  \geq d_n,$$
where the underlined subsequence may be empty (if $t=p-1$).
We shall refer to our original sequence $(d_1, \ldots, d_n)$ as $\pi$ and to the above sequence as $\pi'$.

\begin{claim}\label{tp} $d_t
\leq p-1$.
\end{claim}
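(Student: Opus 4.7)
The plan is to argue by contradiction: I will assume $d_t \ge p$ and derive a violation of (\ref{EGsumMatch}) at $k = t$. Note that $t \ge 1$ in both of the cases above (Case 1 has $t = p-1 \ge 1$ since $p \ge 2$, and in Case 2 the index $t$ is defined to satisfy $1 \le t < p$), so $k = t$ is a valid index for (\ref{EGsumMatch}).

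For the left-hand side of (\ref{EGsumMatch}) at $k = t$, the assumption $d_t \ge p$ combined with $d_1 \ge \cdots \ge d_t$ immediately yields $\sum_{i=1}^{t} d_i \ge tp$.

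For the right-hand side, I will exploit that by the maximality of $p$ together with the hypothesis $d_n \ge 1$, we have $d_{p+1} = d_{p+2} = \cdots = d_n = 1$, so every summand of the form $\min\{d_i - 1, t\}$ with $i > p$ contributes exactly $0$. The remaining summands have index in $\{t+1, \ldots, p\}$, are each at most $t$, and total exactly $p - t$ in number (regardless of whether the middle term $\min\{d_{t+1}, t\}$ appears, as in the $t$-odd case). Thus
\[
\mathrm{RHS} \;\le\; t(t-1) + (p - t)\, t \;=\; t(p-1).
\]

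Combining these two bounds gives $tp \le t(p-1)$, which forces $t \le 0$ and contradicts $t \ge 1$; hence $d_t \le p - 1$. The step requiring the most care is the RHS bound, where I must verify that the count of $p - t$ nonzero summands is correct for both parities of $t$, including the boundary case $t = p - 1$ where the tail sum $\sum_{i=t+2}^{p}$ is empty but the solitary term $\min\{d_{t+1}, t\}$ still contributes up to $t$ (keeping the total count at $p - t = 1$).
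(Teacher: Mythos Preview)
Your proof is correct and follows essentially the same approach as the paper: both argue by contradiction at $k=t$, bounding the left side of~(\ref{EGsumMatch}) below by $tp$ and the right side above by $t(t-1)+(p-t)t=t(p-1)$. The paper compresses the RHS bound by writing only the odd-$t$ form (which dominates the even-$t$ form), whereas you explicitly check both parities and the boundary $t=p-1$; the content is the same.
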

\begin{proofc} If not, this implies
$$
\sum_{i=1}^t
 d_i \geq pt = t(t-1)+2t +(p-t-1) \cdot t>t(t-1) + \min\{d_{t+1},t\} +\sum_{i=t+2}^p \min\{d_i-1,t\},
 $$
contradicting the fact that (\ref{EGsumMatch}) holds for $\pi$.
\end{proofc}

\begin{claim}\label{badk} We may assume that there exists $k\in[n]$ for which (\ref{EGsumMatch}) fails for $\pi'$.
\end{claim}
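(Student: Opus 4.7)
The plan is to treat Claim \ref{badk} as a ``reduce to this case'' step within the ongoing induction on $\sum d_i$: I will show that if (\ref{EGsumMatch}) also holds for $\pi'$, then $\pi$ already realizes $\mmax$, completing the inductive step. Hence in the remainder of the proof we may assume that (\ref{EGsumMatch}) fails for $\pi'$ at some index $k$.

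First I would verify that $\pi'$ satisfies the remaining hypotheses of Theorem \ref{t.SpecialMatch}. Monotonicity is exactly the displayed inequality chain following the definition of $\pi'$. We have $d'_n \geq 1$ because either $p < n$ (so $d'_n = d_n \geq 1$) or $p = n$ (so $d'_n = d_p - 1 \geq 1$, using $d_p \geq 2$). Further $\sum d'_i = \sum d_i - 2$ is even, and $n$ is even. Combined with the hypothesis of the claim, the induction on $\sum d_i$ delivers a graph $G'$ on vertex set $[n]$ realizing $\pi'$ with $\mmax \subseteq E(G')$.

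Next I would upgrade $G'$ to a realization of $\pi$ that still contains $\mmax$, adjusting only at the positions $t$ and $p$ where $\pi$ and $\pi'$ differ. If $tp \notin E(G')$, take $G := G' + tp$; the degrees of $t$ and $p$ each rise by one to $d_t$ and $d_p$, the other degrees are unchanged, and $\mmax \subseteq E(G') \subseteq E(G)$. Otherwise $tp \in E(G')$, and I would perform a $2$-switch: find $u, v \in [n] \setminus \{t, p\}$ with $u \neq v$, $uv \in E(G') \setminus \mmax$, $tu \notin E(G')$, and $pv \notin E(G')$, and set $G := G' - uv + tu + pv$. Since $uv \notin \mmax$ and we only add further edges, $\mmax$ remains in $E(G)$; the degree changes cancel at $u$ and $v$ and contribute $+1$ at each of $t$ and $p$, so $G$ realizes $\pi$. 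Note that this switch leaves $tp$ itself untouched, so it works uniformly whether or not $tp \in \mmax$.

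The main obstacle is proving the existence of such a pair $(u, v)$ in the subcase $tp \in E(G')$. I expect a counting argument: the sets $X := [n] \setminus (N_{G'}(t) \cup \{t\})$ and $Y := [n] \setminus (N_{G'}(p) \cup \{p\})$ have sizes $n - d_t$ and $n - d_p$ respectively, and since $tp \in E(G')$ they already avoid $p$ and $t$. If no valid $(u,v)$ existed, then every edge of $E(G') \setminus \mmax$ would miss $X \times Y$; translating this into an upper bound on the number of edges of $G'$, and plugging in the known degrees, should yield a violation of (\ref{EGsumMatch}) for $\pi$ at some $k$ (most likely $k = d_t$ or a closely related value), giving the required contradiction. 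The most delicate bookkeeping will be the case analysis on whether the $\mmax$-partners of $t$ and $p$ coincide (i.e., $tp \in \mmax$) or are distinct vertices elsewhere, since this shifts which pairs in $X \times Y$ are forbidden by the ``$\notin \mmax$'' requirement.
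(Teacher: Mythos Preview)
Your overall architecture---verify that $\pi'$ meets the hypotheses, invoke the induction hypothesis to get $G'\supseteq\mmax$, add $tp$ if it is absent, and otherwise perform an edge exchange---is exactly what the paper does. The exchange you describe, $G:=G'-uv+tu+pv$, is also the same operation the paper performs (there with $x$ playing the role of your $u$ and $y$ the role of your $v$).

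The genuine gap is the existence of the pair $(u,v)$ in the subcase $tp\in E(G')$. You only \emph{sketch} a plan (``I expect a counting argument\ldots should yield a violation of (\ref{EGsumMatch}) for $\pi$ at some $k$''), and this plan is neither executed nor clearly viable: it is not at all obvious that the nonexistence of such a pair forces a failure of (\ref{EGsumMatch}) at any particular $k$, and the bookkeeping you anticipate (matching partners of $t$ and $p$, ordered versus unordered edges in $X\times Y$) does not obviously collapse into an Erd\H{o}s--Gallai-type inequality. The paper does \emph{not} go this route. Instead it uses Claim~\ref{tp} (already proved from (\ref{EGsumMatch})) directly: since $\deg_{G'}(t)=d_t-1\le p-2$, there is some $x\in[p]\setminus\{t,p\}$ with $x\not\sim t$. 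Then $\deg_{G'}(x)=d_x\ge d_p>d_p-1=\deg_{G'}(p)$, and because $t\in N_{G'}(p)\setminus N_{G'}(x)$, there are at least two vertices $y$ with $y\sim x$ and $y\not\sim p$; choosing $y$ different from the $\mmax$-partner of $x$ guarantees $xy\notin\mmax$. This pins down your $(u,v)=(x,y)$ with no global counting needed. The key idea you are missing is precisely this local use of Claim~\ref{tp} to force a non-neighbour of $t$ \emph{inside} $[p]$, which in turn makes the strict inequality $\deg_{G'}(x)>\deg_{G'}(p)$ available.
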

\begin{proofc}
If not, we may apply induction to $\pi'$. This tells us that it can realize the desired matching; let $G'$ be such a realization on $[n]$. If $t \not\sim p$ in $G'$, then by adding the edge $(t, p)$ we get the graph $G$ that we need. So we may assume that $(t, p)\in E(G')$.

Note that $\deg_{G'}(t)=d_t-1 \leq p-2$, by Claim \ref{tp}. This means that there exists $x \in V(G')$ such that $x\not\sim t$ in $G'$ and $x\leq p$. But then $\deg_{G'}(x)\geq \deg_{G'}(p)$, while $t$ contributes to the degree of $p$ but not to the degree of $x$, this means that there exists $y$ such that $y\sim x$ but $y\not\sim p$ in $G'$. We now have a square of vertices $(x, y, p, t)$ with one matching in $G'$ ($(t, p), (x, y)$) and another matching ($(x, t), (y, p)$) not in $G'$; by switching and then adding the edge $(t, p)$ we get a graph $G$ with our desired degree sequence. This $G$ is as desired unless the edge $(x,y)$, the only edge of $G'$ that is not in $G$, happens to be one of our matching edges. If $\deg_{G'}(x)\geq \deg_{G'}(p)+1$, then we could have avoided this scenario as there were at least 2 valid choices for $y$ above. But this is certainly true because $\deg_{G'}(x)=d_x\geq d_p>d_{p}-1=\deg_{G'}(p)$.
\end{proofc}

We will now work to narrow down the possible values for $t, k, p$ and $d_t$. In most situations we will show that the above induction works: namely, we'll contradict Claim \ref{badk} by showing that $\pi'$ actually satisfies (\ref{EGsumMatch}) for $k$. There will also be a couple of instances where we can't get that contradiction (i.e.\ induction does not work), but in those situations we will be able to describe explicit constructions for the desired $G$.

\begin{claim}\label{case1} $t\geq k+1$ (and $p\geq k+2$).
\end{claim}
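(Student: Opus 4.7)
The plan is to argue by contradiction: suppose $t\le k$, and show that (\ref{EGsumMatch}) actually holds for $\pi'$ at this same $k$, contradicting Claim \ref{badk}. Since $\pi'$ differs from $\pi$ only by decreasing $d_t$ and $d_p$ by $1$ each, I would track how both sides of (\ref{EGsumMatch}) move as we pass from $\pi$ to $\pi'$. The left-hand side $\sum_{i=1}^k d_i$ decreases by exactly $|\{t,p\}\cap[k]|$. On the right-hand side, the constant $k(k-1)$ is unaffected, and only the $\min$-terms indexed by $\{t,p\}\cap\{k+1,\dots,n\}$ can change; decreasing any single $d_i$ by $1$ reduces its associated $\min\{d_i,k\}$ or $\min\{d_i-1,k\}$ term by at most $1$.

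I would then split on whether $p\le k$ or $p>k$. In the first subcase (which forces $t<p\le k$), both decremented entries lie in the initial segment, so the LHS drops by $2$ while the RHS is unchanged; hence (\ref{EGsumMatch}) for $\pi'$ at $k$ holds with even more slack than it did for $\pi$. In the second subcase ($p>k$ with $t\le k$), the LHS drops by exactly $1$ and the RHS drops by at most $1$ (from the single $\min$-term associated with index $p$), so the inequality for $\pi'$ at $k$ is again preserved. Either way we contradict Claim \ref{badk}, so $t\ge k+1$; then since $t<p$ by construction, $p\ge t+1\ge k+2$.

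The only piece that requires any care is the odd-$k$ form of (\ref{EGsumMatch}), where the term for index $k+1$ appears as $\min\{d_{k+1},k\}$ rather than $\min\{d_{k+1}-1,k\}$. This matters exactly when $p=k+1$ in the second subcase, but a one-line check gives $\min\{d_{k+1},k\}-\min\{d_{k+1}-1,k\}\in\{0,1\}$, so the ``at most $1$'' bound on the right-hand side drop is unaffected. I do not anticipate any genuine obstacle: the claim reduces to bookkeeping the responses of each side of (\ref{EGsumMatch}) to decrementing two entries of the sequence, and the parity of $k$ plus the location of $p$ give a clean finite case analysis.
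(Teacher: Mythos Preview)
Your proof is correct and follows essentially the same approach as the paper's: assume $t\le k$, split on whether $p\le k$ or $p>k$, and compare the change in each side of (\ref{EGsumMatch}) to contradict Claim~\ref{badk}. If anything, your write-up is more careful than the paper's terse version, since you explicitly check the odd-$k$ boundary case $p=k+1$ where the relevant term is $\min\{d_{k+1},k\}$ rather than $\min\{d_{k+1}-1,k\}$.
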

\begin{proofc}
Suppose, for a contradiction, that $t\leq k$. Consider the truth of (\ref{EGsumMatch}) for our original sequence. If $p\leq k$ then we need only subtract from the right-hand-side of the inequality to get (\ref{EGsumMatch}) for our new sequence, contradicting Claim \ref{badk}. Otherwise since $t\leq k$ by subtracting one from both sides we get (\ref{EGsumMatch}); note that on the right-hand-side we may fold the -1 into the sum for just $d_p$.
\end{proofc}

\begin{claim}\label{tk} $d_t \geq k$.
\end{claim}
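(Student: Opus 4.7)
The plan is to suppose for contradiction that $d_t \leq k-1$, and then contradict Claim \ref{badk} by showing that $\pi'$ actually satisfies $(\ref{EGsumMatch})$ at this $k$. Since $t \geq k+1$ and $p \geq k+2$ by Claim \ref{case1}, the sum $\sum_{i=1}^k d_i$ is identical for $\pi$ and $\pi'$, so only the right-hand side changes. Under $d_t \leq k-1$ we also have $d_p \leq d_t \leq k-1$, and a short case check---separating $t \geq k+2$ (or $k$ even with $t = k+1$) from the special subcase $t = k+1$ with $k$ odd, in which the position-$(k+1)$ term of the RHS is $\min\{d_{k+1}, k\}$ rather than $\min\{d_{k+1}-1, k\}$---shows that each of the two modified contributions decreases by exactly $1$. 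Hence $\mathrm{RHS}'(k) = \mathrm{RHS}(k) - 2$, and the problem reduces to producing a slack of $2$: $\mathrm{RHS}(k) - \mathrm{LHS}(k) \geq 2$ for $\pi$.

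The main step is proving this slack. I would begin with a parity observation: since $n$, $k(k-1)$, and $\sum_{i=1}^n d_i$ are all even, an algebraic rewrite of $\mathrm{RHS}(k) - \mathrm{LHS}(k)$ in terms of $\sum_{i=1}^n d_i$, $|A|$, and $N$---where $A = \{i > k : d_i \geq k+1\}$ and $N = \lvert\{i > k : d_i \leq k\}\rvert$, so $|A| + N = n-k$---reveals that in the natural subcase $d_{k+1} \leq k$ (so $A = \emptyset$) the quantity $\mathrm{RHS}(k) - \mathrm{LHS}(k)$ is forced to be even. Combined with $\pi$'s satisfaction of $(\ref{EGsumMatch})$ at $k$, this forces the slack to be either $0$ or at least $2$, so only the tight case remains to exclude.

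The hard part is ruling out tightness $\mathrm{RHS}(k) = \mathrm{LHS}(k)$. I would handle it in the spirit of the proof of Claim \ref{tp}: apply $(\ref{EGsumMatch})$ to $\pi$ at the auxiliary index $k' = t$ and exploit the structural facts built into the definition of $t$---namely $d_{t+1} = d_{t+2} = \cdots = d_p$, together with $d_{p+1} = \cdots = d_n = 1$ and Claim \ref{tp}'s bound $d_t \leq p-1$---to derive a second numerical constraint incompatible with tightness at $k$ together with $d_t \leq k-1$. The residual subcase $d_{k+1} \geq k+1$ requires a finer split on $|A|$ and mimics the accounting already in hand. I expect the genuine obstacle to be the tight-case corner, where purely local accounting of the sort used in the short proofs of Claims \ref{tp} and \ref{case1} no longer suffices and more global structural information must be invoked; this corner is also the natural candidate for the ``couple of instances where induction does not work'' flagged in the outline above, to be dispatched later by explicit construction rather than by this claim.
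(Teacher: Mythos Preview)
You have overlooked the one structural fact on which the paper's proof hinges: by the choice of $t$, we have $d_1 = d_2 = \cdots = d_t$. Combined with $t \geq k+1$ from Claim~\ref{case1}, the contradiction hypothesis $d_t \leq k-1$ then gives $d_1 = \cdots = d_k \leq k-1$, so $\sum_{i=1}^k d_i \leq k(k-1)$. Since every term on the right-hand side of (\ref{EGsumMatch}) for $\pi'$ beyond $k(k-1)$ is nonnegative, (\ref{EGsumMatch}) holds for $\pi'$ at $k$, contradicting Claim~\ref{badk}. That is the paper's entire proof---three lines, no parity, no tightness analysis.

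Your plan instead computes that the right-hand side drops by exactly $2$ when passing from $\pi$ to $\pi'$ (this part is correct) and then tries to manufacture a slack of $2$ for $\pi$ via a parity observation followed by a separate exclusion of the tight case. The parity step is fine in the subcase $d_{k+1}\le k$, but the ``hard part''---ruling out $\mathrm{RHS}(k)=\mathrm{LHS}(k)$ by invoking (\ref{EGsumMatch}) at the auxiliary index $t$---is only gestured at, never executed, and there is no evidence it closes without substantial further casework. Your guess that this corner might be one of the ``induction fails, construct explicitly'' instances is also misplaced: those occur later, in Claim~\ref{tk+1} and the tightness claim, not here. You invoke the structure of $t$ only in the form $d_{t+1}=\cdots=d_p$; the form $d_1=\cdots=d_t$ is what collapses the left-hand side directly and makes the entire slack-accounting detour unnecessary.
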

\begin{proofc}
\indent By our choice of $t$, we know that $d_1=\cdots =d_t$, so we get that  $d_1=\cdots =d_k \leq k-1$. Hence we get that
$$\sum_{i=1}^{k} d_i \leq k(k-1),$$
which immediately implies (\ref{EGsumMatch}) for our new sequence, contradicting Claim \ref{badk}.
\end{proofc}

\begin{claim}\label{parity} If $d_t=k$, then the two sides of (\ref{EGsumMatch}) have the same parity for $\pi'$ (and for $\pi$). The same is true when $d_t=k+1$ and $k$ is even.
\end{claim}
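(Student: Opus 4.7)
My plan is to reduce the claim to a single direct computation: $\mathrm{RHS}_\pi - \mathrm{RHS}_{\pi'} = 2$ at position $k$, i.e., the two right-hand sides differ by an even amount. By Claim~\ref{case1} we have $t \geq k+1$ and $p \geq k+2$, so the only positions at which $\pi$ and $\pi'$ disagree lie strictly beyond $k$. This forces $\mathrm{LHS}_\pi = \sum_{i=1}^k d_i = \sum_{i=1}^k d_i' = \mathrm{LHS}_{\pi'}$, and hence the parity of $\mathrm{LHS} - \mathrm{RHS}$ will agree for $\pi$ and for $\pi'$ as soon as we show the RHS difference is even; this in turn justifies the assertion that ``the two sides of (\ref{EGsumMatch}) have the same parity'' transfers between $\pi$ and $\pi'$.

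To compute the RHS difference I would sum the term-by-term changes at positions $t$ and $p$. At $p$, which lies in $\{k+2, \ldots, n\}$, the RHS contribution is $\min\{d_p-1,k\}$ for $\pi$ and $\min\{d_p-2,k\}$ for $\pi'$. At $t$ the contribution is of the same form if $t \geq k+2$ or $k$ is even, but if $k$ is odd and $t = k+1$, the $t$-term is instead governed by the special summand $\min\{d_{k+1},k\}$ in (\ref{EGsumMatch}), which becomes $\min\{d_{k+1}-1,k\}$ in $\pi'$. Under the hypothesis $d_t = k$, each of these candidate $t$-changes evaluates to $1$: the generic case gives $(k-1)-(k-2)=1$ and the odd/$t=k+1$ case gives $k-(k-1)=1$. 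The $p$-change is also $1$, since $d_p \leq d_t = k$ makes both minima unclipped and yields $(d_p-1)-(d_p-2)=1$. Summing, $\mathrm{RHS}_\pi - \mathrm{RHS}_{\pi'} = 2$.

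An essentially identical calculation handles the second hypothesis, $d_t = k+1$ with $k$ even (where no special summand can arise because $k$ is even): the $t$-change is $\min\{k,k\} - \min\{k-1,k\} = 1$, and the $p$-change is $1$ in both sub-cases $d_p = k+1$ (giving $k - (k-1) = 1$) and $d_p \leq k$ (giving $(d_p-1) - (d_p-2) = 1$). The only piece of bookkeeping that deserves real attention is the odd-$k$/$t = k+1$ sub-case of the first hypothesis, where the RHS formula uses the unshifted $\min\{d_{k+1},k\}$ rather than the generic $\min\{d_t-1,k\}$; as shown above, the change there is still $1$, so the conclusion that $\mathrm{RHS}_\pi - \mathrm{RHS}_{\pi'} = 2$ holds uniformly and the claim follows.
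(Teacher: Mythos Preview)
Your computation that $\mathrm{RHS}_\pi - \mathrm{RHS}_{\pi'} = 2$ is correct, and in fact the paper uses exactly this observation as the second half of its own argument. But you have misread what the claim asserts. The statement ``the two sides of (\ref{EGsumMatch}) have the same parity for $\pi'$ (and for $\pi$)'' means that $\mathrm{LHS}_{\pi'} \equiv \mathrm{RHS}_{\pi'} \pmod 2$ and, separately, that $\mathrm{LHS}_{\pi} \equiv \mathrm{RHS}_{\pi} \pmod 2$. It is \emph{not} merely the assertion that these two congruences are equivalent to each other. Look at how the claim is invoked in Claims~\ref{tk+1} and~6: in each case the authors need to know that the slack in (\ref{EGsumMatch}) is even, so that ``within one'' can be upgraded to ``at least'' or a slack of exactly one yields a contradiction. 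Your argument establishes only that the slack for $\pi$ and the slack for $\pi'$ have the same parity; it never shows that this common parity is even.

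The missing ingredient is to prove $\mathrm{LHS}_\pi \equiv \mathrm{RHS}_\pi \pmod 2$ directly. The paper does this as follows: since $\sum_{i=1}^n d_i$ is even, $\sum_{i\le k} d_i \equiv \sum_{i>k} d_i$; and since $n$ is even, subtracting $1$ from either $n-k$ terms ($k$ even) or $n-k-1$ terms ($k$ odd) preserves parity. The key point, which genuinely uses the hypothesis $d_t \le k+1$ together with $d_1=\cdots=d_t$, is that every $d_i$ with $i\ge k+1$ satisfies $d_i-1\le k$, so each $\min\{d_i-1,k\}$ equals $d_i-1$ and the RHS of (\ref{EGsumMatch}) modulo~$2$ reduces to $\sum_{i>k} d_i$ (the term $k(k-1)$ being even). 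Once this is in hand, your difference-of-two computation finishes the job for $\pi'$.
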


\begin{proofc}
First observe that $\sum_{i=1}^k d_i$ has the same parity as $\sum_{i=k+1}^n d_i$ since $\sum_{i=1}^n d_i$ is even. This in turn has the same parity as $\sum_{i=k+1}^n (d_i -1)$ when $k$ is even (since $n-k$) is even, and has the same parity as $d_{k+1}+ \sum_{i=k+2}^n (d_i -1)$ when $k$ is odd (since $n-(k+1)$) is even.  Since $\min\{d_i-1, k\}=d_i-1$ for all $i$, we get that $\sum_{i=1}^k d_i$ has the same parity as $\sum_{i=k+1}^n \min\{d_i -1,k\}$ when $k$ is even, and has the same parity as $d_{k+1}+ \sum_{i=k+2}^n \min\{d_i -1,k\}$ when $k$ is odd.

Note that $\min\{d_i-2, k\}=\min\{d_i-1, k\}-1$ for $i\in t, p$, since $d_t\leq k+1$. Since subtracting two does not change the parity, and since $k(k-1)$ is always even, this immediately gives our desired result for $k$ even.

Now suppose that $k$ is odd. If $d_{k+1}=k$ then we get our desired result, as in the even case.
\end{proofc}

\begin{claim}\label{tk+1} We may assume that $d_t\geq k+1$.
\end{claim}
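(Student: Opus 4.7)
The plan is to argue by contradiction: suppose $d_t = k$. By Claim \ref{tk} we already know $d_t \geq k$, so this is the only case left to rule out. I will show that this forces (\ref{EGsumMatch}) to be tight at $k$ for $\pi$, and then exhibit $G$ realizing $\pi$ with $M^+$ directly, completing the proof of the theorem in this case so that the remainder of the induction may assume $d_t \geq k+1$.

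First I would quantify the change in (\ref{EGsumMatch}) at index $k$ between $\pi$ and $\pi'$. Since $t \geq k+1$ by Claim \ref{case1} and $p \geq k+2$, the left side is unchanged. On the right, the decrement at $t$ drops $\min\{d_t-1,k\} = k-1$ to $k-2$ (or, in the $k$-odd subcase $t = k+1$, drops the special term $\min\{d_{k+1},k\} = k$ to $\min\{d_{k+1}-1,k\} = k-1$), a drop of $1$. The decrement at $p$ drops $\min\{d_p-1,k\}$ by $1$, using $d_p \leq d_t = k$. So $\text{RHS}_{\pi'} = \text{RHS}_\pi - 2$. Combined with Claim \ref{badk} (so $\text{LHS}_\pi > \text{RHS}_\pi - 2$) and (\ref{EGsumMatch}) for $\pi$ (so $\text{LHS}_\pi \leq \text{RHS}_\pi$), this sandwiches $\sum_{i=1}^k d_i \in \{\text{RHS}_\pi - 1, \text{RHS}_\pi\}$; Claim \ref{parity} then forces the two sides to have the same parity for $\pi$, yielding $\sum_{i=1}^k d_i = \text{RHS}_\pi$, i.e.\ tightness at $k$ for $\pi$.

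With tightness in hand I would construct $G$ directly. Take $G[[k]] = K_k$, which automatically contains the intra-$[k]$ matching edges of $M^+$; for $k$ odd also insert the straddling matching edge $(k, k+1)$. Assign each $i \in [k+1,n]$ exactly $a_i$ bipartite neighbors in $[k]$, where $a_i = \min\{d_i-1, k\}$ in general (and $a_{k+1} = \min\{d_{k+1}, k\}$ for $k$ odd, which already counts the edge $(k, k+1)$). Tightness of (\ref{EGsumMatch}) at $k$ makes the two sides' bipartite degree sums agree, and Gale--Ryser realizes this bipartite layer with $(k, k+1)$ pre-inserted when needed. Finally, apply the theorem inductively --- the residual degree sum on $[k+1,n]$ is strictly less than $\sum d_i$ --- to realize the residual sequence together with the remaining within-$[k+1,n]$ matching edges.

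The main obstacle will be verifying that the residual sub-problem on $[k+1,n]$ actually inherits the hypothesis (\ref{EGsumMatch}), especially in the $k$-odd case, where vertex $k+1$ is unmatched within the residual sub-problem because its partner $k$ already lies in $[k]$. This will require treating $k+1$ as a special boundary vertex --- tracking its (non-matching) connections to $[k+2,n]$ separately --- and then verifying (\ref{EGsumMatch}) for the residual sequence via a careful combinatorial inheritance from (\ref{EGsumMatch}) at indices other than $k$ for the original $\pi$, together with the slack freed up by tightness at $k$.
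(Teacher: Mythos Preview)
Your tightness argument (the first two paragraphs) is correct: under $d_t=k$ the right-hand side of (\ref{EGsumMatch}) drops by exactly $2$ from $\pi$ to $\pi'$, and Claims~\ref{badk} and~\ref{parity} then pin down equality at $k$ for $\pi$.

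The trouble is the construction that follows. You propose $K_k$ on $[k]$, a bipartite layer to $[k+1,n]$, and a recursively-realized residual --- but the load-bearing steps are not carried out:
\begin{itemize}
\item The Gale--Ryser step is asserted, not checked; and in the $k$-odd case you need a bipartite realization containing the \emph{prescribed} edge $(k,k+1)$, which is not what Gale--Ryser hands you.
\item For $k$ odd the residual on $[k+1,n]$ is not an instance of the theorem at all: vertex $k+1$ is unmatched inside the residual, so Theorem~\ref{t.SpecialMatch} does not apply. You note this (``special boundary vertex'') but do not supply the needed variant.
\item Most importantly, you flag the inheritance of (\ref{EGsumMatch}) to the residual as ``the main obstacle'' and then describe what must be done without doing it. That verification is the heart of your plan, and it is absent.
\end{itemize}
So the proposal is a programme, not a proof.

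The paper avoids all of this. Using $d_1=\cdots=d_{k+1}=k$ it writes $\sum_{i=1}^k d_i=k(k-1)+\min\{d_{k+1},k\}$ and then shows directly (with the parity boost from Claim~\ref{parity}) that the right-hand side of (\ref{EGsumMatch}) for $\pi'$ is at least as large --- \emph{contradicting} Claim~\ref{badk} rather than abandoning the induction. This handles $k$ odd in one line and $k$ even in all but a single fully explicit sequence $(k,\ldots,k,2,1,\ldots,1)$, for which $G$ is written down in half a sentence. In short: where you try to build $G$ from scratch via a constrained bipartite lemma and a boundary-vertex generalization, the paper simply feeds the case $d_t=k$ back into the existing machinery.
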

\begin{proofc}
By our choice of $t$ we get that $d_1=\cdots d_t=k$ and since $t>k$ this means $d_1=\cdots d_{k+1}=k$. We get
\begin{equation}\label{case3sum}
    \sum_{i=1}^{k} d_i = k(k-1)+ k= k(k-1)+ \min\{d_{k+1}, k\}.
\end{equation}
Suppose first that $k$ is odd.  By Claim \ref{parity}, it is enough to show that $\left(\sum_{i=k+2}^n (d_i-1)\right)\geq 1$, since this means we could add $\left(\sum_{i=k+2}^n (d_i-1)\right)-1$ to the right-hand-side of (\ref{case3sum}) and be within one of our desired inequality, yielding a contradiction to Claim \ref{badk}.
But since $k+2\leq p$ we get our result immediately since $d_p\geq 2$.

We may now assume that $k$ is even.  We consider the quantity
\begin{equation}\label{RHS}
k(k-1)+\mathop{\sum_{i=k+1, i\neq t, p}^n} \min\{d_i-1, k\}+ \min\{d_t-2, k\}+\min\{d_p-2, k\},
\end{equation}
which we aim to show is greater than or equal to (\ref{case3sum}), in order to contradict Claim \ref{badk}. In particular, given Claim \ref{parity}, it suffices to show that the last three terms in (\ref{RHS}) are at least $k-1$. If $t\ge k+2$, then the sum of the two middle terms is at least
$k-1+k-2=2k-3$, which is at least $k-1$ since $k\geq 2$. So we may assume that $t=k+1$. Now $\min\{d_{t}-2, k\}=k-2$, and we still get our desired result if $d_p\geq 3$, or if there is a another contributing term in our sum, i.e., if $p\ge k+3$. So we may assume that $p=k+2$ and $d_p=2$, meaning that $\pi$ is $(k, \ldots, k, 2, 1, \ldots, 1)$, with $k+1$ leading $k$'s, followed by one two, and then some even number of 1's (since $k$ is even). In this case, we construct our desired $G$
by taking a complete graph on $[k+1]$,  removing the edge joining $k$ and $k+1$,
adding one edge joining $k$ and $k+2$ and one edge joining $k+1$ and $k+2$,  and for each odd $j\in [k+3, n]$, we
add an edge joining $j$ and $j+1$.
\end{proofc}

\begin{claim} We may assume that (\ref{EGsumMatch}) is tight for $\pi$ at value $k$.
\end{claim}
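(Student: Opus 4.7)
The plan is to argue the claim by contradiction with Claim \ref{badk}: I suppose (\ref{EGsumMatch}) is strict for $\pi$ at $k$, meaning $\sum_{i=1}^k d_i \le R(\pi,k)-1$ where $R(\pi,k)$ denotes the right-hand side of (\ref{EGsumMatch}), and aim to show (\ref{EGsumMatch}) must then hold for $\pi'$ at the same $k$. Because $t,p \ge k+1$ by Claim \ref{case1}, the left-hand sides of (\ref{EGsumMatch}) at $k$ coincide for $\pi$ and $\pi'$, so it suffices to bound the drop $R(\pi,k)-R(\pi',k)$ from above by the slack of $\pi$.

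I would analyze the drop $R(\pi,k)-R(\pi',k)$ position-by-position. At $p$, the summand $\min\{d_p-1,k\}-\min\{d_p-2,k\}$ decreases by $1$ if $d_p\le k+1$ and by $0$ if $d_p\ge k+2$. At $t$, by Claim \ref{tk+1} we have $d_t\ge k+1$: if $k$ is odd with $t=k+1$, then the summand $\min\{d_t,k\}=k$ does not change, while otherwise the summand $\min\{d_t-1,k\}$ decreases by $1$ exactly when $d_t=k+1$. Hence $R(\pi,k)-R(\pi',k)\le 2$, with equality forcing $d_t=k+1$, $d_p\le k+1$, and excluding the $(k$ odd, $t=k+1)$ sub-case.

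When $R(\pi,k)-R(\pi',k)\le 1$, the strict inequality for $\pi$ gives $R(\pi',k)\ge R(\pi,k)-1\ge \sum_{i=1}^k d_i$, so $\pi'$ satisfies (\ref{EGsumMatch}) at $k$, contradicting Claim \ref{badk}. When the drop is $2$ and $k$ is even, Claim \ref{parity} applies (since $d_t=k+1$) and forces the two sides of (\ref{EGsumMatch}) at $k$ to have the same parity for $\pi$; strict slack must then be at least $2$, so once again $R(\pi',k)\ge R(\pi,k)-2 \ge \sum_{i=1}^k d_i$, contradicting Claim \ref{badk}.

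The one place this inductive scheme cannot be closed by contradiction is when the drop is $2$ and $k$ is odd --- necessarily $t\ge k+2$, $d_t=k+1$, and $d_p\le k+1$ --- because parity genuinely fails there (witness $\pi = (2,2,2,2,1,\ldots,1)$ at $k=1$, whose slack is exactly $1$). My plan for this residual family is to short-circuit the induction by constructing a realization of $\pi$ containing $M^+$ directly, exploiting the rigid structure $d_{k+1}=\cdots=d_t=k+1$ together with $d_i\le k$ for $i>t$, along the lines of the ad hoc construction at the end of the proof of Claim \ref{tk+1}. The main technical obstacle lies in this exceptional case: verifying that a single construction covers every sequence of this shape and produces the required degrees, so that even when tightness at $k$ fails for $\pi$, the overall proof still advances.
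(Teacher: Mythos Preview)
Your approach is essentially the same as the paper's: compare $R(\pi,k)$ and $R(\pi',k)$ term by term, observe that the drop is at most~$2$, dispose of drop $\le 1$ directly and drop $=2$ with $k$ even by parity, and isolate the residual case (drop $=2$, $k$ odd, $d_t=k+1$, $t\ge k+2$). Your case analysis and your witness $(2,2,2,2,1,\dots,1)$ at $k=1$ are both correct.

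What you are missing is that the residual family is far smaller than you fear, and the paper pins it down completely. Since $d_1=\cdots=d_t=k+1$ and $k$ is odd, the left-hand side at $k$ equals $k(k+1)$. Now compute $R(\pi',k)$ directly: it contains the terms $k(k-1)$, $\min\{d_{k+1},k\}=k$ (at index $k+1$), $\min\{d_t-2,k\}=k-1$ (at index $t$), and $\min\{d_p-2,k\}\ge 0$ (at index $p$), already giving $k^2+k-1$. Any further contribution of at least $1$ yields $R(\pi',k)\ge k(k+1)$ and contradicts Claim~\ref{badk}. But such an extra contribution appears whenever $t\ge k+3$ (from index $k+2$), or $p\ge t+2$ (from index $t+1$), or $d_p\ge 3$ (from index $p$). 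Hence $t=k+2$, $p=k+3$, and $d_p=2$, so $\pi=(k+1,\dots,k+1,2,1,\dots,1)$ with exactly $k+2$ leading $(k+1)$'s. Your example is precisely the $k=1$ instance of this single family.

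For this one sequence the construction is immediate: take $K_{k+2}$ on $[k+2]$, delete the edge $(k+1,k+2)$, add the two edges $(k+1,k+3)$ and $(k+2,k+3)$, and add $(j,j+1)$ for every odd $j\in\{k+4,\dots,n\}$. Since $k$ is odd, $k+2$ is odd, so $M^+$ restricted to $[k+2]$ sits inside the clique, the pair $(k+2,k+3)$ is covered, and the remaining $M^+$-edges are exactly the ones added on $[k+4,n]$. So no ``single construction covering every sequence of this shape'' is needed---there is only one shape.
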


\begin{proofc}
If instead of tightness there is a slack of two or greater, then we may freely subtract two from the right-hand-side, which gives us a contradiction to Claim \ref{badk}. So we may assume there is a slack of exactly one. This means we can freely subtract one from the right-hand side, allowing us to replace $\min\{d_p-1, k\}$ with $\min\{d_p-2, k\}$. If we also know that $d_{t}\geq k+2$, then we again get a contradiction to Claim \ref{badk}. In fact, we also get this contradiction when $t=k+1$ if $d_t=k+1$ and $t$ is odd. So we may assume that $d_1=\cdots =d_{t}= k+1$, and either $t\geq k+2$ or $t=k+1$ and $k$ is even. However, since $d_t=k+1$, if $k$ is even then Claim \ref{parity} tells us that the two sides of (\ref{EGsumMatch}) have the same parity for $\pi$, while our assumption is that they differ by exactly one.  So it must be the case that $t\geq k+2$ and $k$ is odd.

We know that the left-hand side of (\ref{EGsumMatch}) for $k$ and $\pi'$ is $k(k+1)=k^2+k$, while the sum on the right-hand side of (\ref{EGsumMatch}) for $k$ and $\pi'$ contains $k(k-1)=k^2-k$ as a term, and also contains $\min\{d_t-2, k\}=k-1$, $\min\{d_p-1, k\}$, and either $\min\{d_{k+1},k\}=k$ (since $k$ is odd); hence we get at least $k^2+k-1$. We get at least one more than this (and hence achieve our desired contradiction), if $t\ge k+3$,  if $p\ge t+2$,  or if $d_p \ge 3$. So we may assume that $d_1= \ldots =d_{k+2}=k+1$, $d_{k+3}=2$, and $d_i=1$ for each $i\in \{k+4, \ldots, n\}$. In this case, we construct $G$ by taking a complete graph $[k+2]$,  removing the edge joining $k+1$ and $k+2$, adding edges to join both $k+1$ and $k+2$ to $k+3$, and then, for each odd $j\in \{k+4, \ldots,  n\}$, we add an edge joining $j$ and $j+1$.
\end{proofc}

Following the above claim we are able to write an equality in (\ref{EGsumMatch}) for $\pi$ at value $k$.  In order to write this in a more convenient form, let $r$ be the largest index such that $d_{r} \ge k+1$; by Claims \ref{tk+1} and \ref{case1} we know that $r\geq t\geq k+1$. We get
\begin{eqnarray}
&& k\cdot d_t=k(k-1)+k(r-k) +\sum_{i=r+1}^n (d_i-1) \label{kdt}\\
&\Rightarrow& d_t=r-1+\frac{1}{k}\sum_{i=r+1}^n (d_i-1). \label{case5}
\end{eqnarray}

\begin{claim}\label{last} $d_t=k+1$ and $k+1$ is odd.
\end{claim}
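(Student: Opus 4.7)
The plan is first to show $d_t \leq k+1$ (hence $d_t = k+1$ by Claim \ref{tk+1}) and then to force $k$ to be even. Throughout, I would exploit the fact that $\pi$ is tight at $k$ while $\pi'$ fails at $k$ (Claim \ref{badk}), together with the structural information given by (\ref{case5}).

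Suppose for contradiction that $d_t \geq k+2$. Since then $\min\{d_t - 2, k\} = \min\{d_t - 1, k\} = k$, the index-$t$ summand in the right-hand side of (\ref{EGsumMatch}) at $k$ is unaffected when passing from $\pi$ to $\pi'$. Hence the required decrease in the right-hand side must come entirely from the index-$p$ summand, forcing $d_p \leq k+1$. I would then split into two sub-cases and, in each, evaluate (\ref{EGsumMatch}) for $\pi$ itself at a cleverly chosen $k'$ to produce a contradiction to $\pi$ satisfying (\ref{EGsumMatch}).

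If $d_p = k+1$, then $r = p$, $S = 0$, and (\ref{case5}) forces $d_t = p - 1 \geq k+2$, so the sequence takes the explicit form $d_1 = \cdots = d_t = p-1$, $d_{t+1} = \cdots = d_p = k+1$, $d_{p+1} = \cdots = d_n = 1$. Evaluating (\ref{EGsumMatch}) at $k' = k+1$ for this $\pi$ and comparing sides (with sub-sub-cases $t = k+1$ versus $t \geq k+2$, and on the parity of $k$) shows the left-hand side exceeds the right-hand side by either $p - t$, $p - k - 2$, or $p - k - 1$, each strictly positive. If instead $d_p \leq k$, then $r = t$ and (\ref{case5}) reduces to $(p - t)(d_p - 1) = k(d_t - t + 1)$; evaluating (\ref{EGsumMatch}) at $k' = t$ for $\pi$ and substituting this identity, the left-hand side minus the right-hand side simplifies to $(t - k)(d_t - t + 1)$ when $t$ is even or $(t - k)(d_t - t + 1) - 1$ when $t$ is odd, each positive using the constraints $t \geq k+1$ and $d_t \geq k+2$. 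Either sub-case contradicts $\pi$ satisfying (\ref{EGsumMatch}), so $d_t = k+1$.

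With $d_t = k+1$, equation (\ref{case5}) restricts the possible structure of $\pi$ to just two shapes: either $d_1 = \cdots = d_{k+2} = k+1$ with $d_{k+3} = \cdots = d_n = 1$, or $d_1 = \cdots = d_{k+1} = k+1$ with $d_{k+2} = \cdots = d_p$ satisfying $(p - k - 1)(d_p - 1) = k$ and $d_{p+1} = \cdots = d_n = 1$. A brief computation in either shape yields $\sum_{i=1}^n d_i = k^2 + 2k + n$; since this sum and $n$ are both even, $k$ must be even, i.e.\ $k+1$ is odd. The main obstacle is the case analysis in the sub-case $d_p = k+1$ above, where three sub-sub-cases (on whether $t = k+1$ or $t \geq k+2$, and on the parity of $k$) must be handled separately to confirm positivity of the left-hand side minus the right-hand side.
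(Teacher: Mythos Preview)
Your argument is correct, but it follows a genuinely different path from the paper's. The paper handles the two assertions in the opposite order: it first assumes $d_t=k+1$ with $k+1$ even and applies (\ref{EGsumMatch}) for $\pi$ at $k+1$, subtracting the resulting inequality from (\ref{kdt}) to force $d_t\le k$; then, assuming $d_t\ge k+2$, it introduces the auxiliary index $s$ (the largest with $d_s\ge k+2$), again applies (\ref{EGsumMatch}) at $k+1$, and subtracts (\ref{case5}) to obtain $d_t\le s-1$, which combined with (\ref{case5}) forces $s\ge r$ and hence $S=0$, at which point the $p$-summand does not change and Claim~\ref{badk} is contradicted directly. By contrast, you first dispose of $d_t\ge k+2$ by reading off $d_p\le k+1$ from the tightness/failure at $k$ and then testing $\pi$ at $k'=k+1$ (when $d_p=k+1$) or $k'=t$ (when $d_p\le k$); your parity step is then a clean degree-sum computation $\sum d_i=k(k+2)+n$. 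The paper's route is more uniform (everything happens at level $k+1$, via subtraction of inequalities, with no case split on $d_p$), while yours is more explicit and your parity argument is pleasantly self-contained. One small point: in your sub-case $d_p\le k$, the positivity of $(t-k)(d_t-t+1)$ and of $(t-k)(d_t-t+1)-1$ does not follow from $t\ge k+1$ and $d_t\ge k+2$ alone; you also need $d_t\ge t$, which you should note comes from your identity $(p-t)(d_p-1)=k(d_t-t+1)$ together with $p>t$ and $d_p\ge 2$.
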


\begin{proofc} First suppose that $d_t=k+1$ and $k+1$ is even.
We know that $\pi$ also satisfies (\ref{EGsumMatch}) for $k+1$, and since $d_t= k+1$ and $k+1$ is even, this says that
\begin{equation}\label{pik+1}(k+1)d_t \le k(k+1)+k(r-k-1)+\sum_{i=r+1}^n (d_i-1).
\end{equation}
Subtracting (\ref{kdt}) from (\ref{pik+1}) yields $d_t\leq 2k-k=k$, contradicting our assumption.

We may now assume, by Claim \ref{tk+1}, that $d_t\geq k+2$. Let $s$ be the largest index such that $d_{s} \ge k+2$, and consider that $\pi$ also satisfies (\ref{EGsumMatch}) for $k+1$. From this we get the following (noting that $\min\{d_{k+1}, k+1\}=\min\{d_{k+1}-1, k+1\}=k+1$ since $d_{k+1}=d_t\geq k+2$) :
\begin{equation}\label{spik+1}(k+1)d_t \le (k+1)k+(k+1)(s-k-1)+ k(r-s)+\sum_{i=r+1}^n (d_i-1).
\end{equation}
Subtracting~\eqref{case5} from~\eqref{spik+1} gives:
\begin{equation}\label{s-1}
d_t\le 2k -k+s-k-2=s-1.
\end{equation}
Combining this with (\ref{case5}), we get
$ s\ge r+\frac{1}{k}\sum_{i=r+1}^n (d_i-1)$,  a contradiction to $s\le r$ unless
$\frac{1}{k}\sum_{i=r+1}^n (d_i-1)=0$.
In this case, we have $s=r$ and $d_{r+1} =\ldots =d_n=1$, and so $d_p =d_r\ge k+2$.
In this case, the right hand side of (\ref{EGsumMatch}) is the same for both $\pi$ and $\pi^*$, contradicting Claim \ref{badk}.
\end{proofc}

By Claim \ref{last} we now know that $d_t=k+1$ and $k+1$ is odd.  This implies that $d_1= \ldots =d_{k+1} =k+1$, and $d_{k+2} \le k$.  From (\ref{kdt}) we get
 $$k(k+1)=k(k-1)+k +\sum_{i=k+2}^n (d_i-1),$$
 which in particular implies that $\sum_{i=k+2}^n (d_i-1)=k$. In this case, we construct $G$ directly as follows. We start with a complete graph on $[k+1]$, and add an edge joining $k+1$ and $k+2$, and then add an edge between  $i$ and $i+1$ for every  odd $i\in \{k+3, \ldots n\}$. Then, for any $i\in \{k+2,\ldots, n\}$, we  add $d_i-1$ edges  joining $i$ and $d_i-1$  distinct vertices
from $[k]$. Since  $\sum_{i=k+2}^n (d_i-1)=k$ and there are exactly $k$ vertices in [k] that each need exactly one more degree, this is possible, and gives us our desired $G$.
\end{proof}

\section{Proof of Corollary~\ref{corMatch}}

We restate Corollary~\ref{corMatch} for convenience.

\setcounter{theorem}{4}
\begin{corollary} Let $(d_1, d_2, \ldots, d_n)$ be a graphic sequence, with $n$ an even integer and $d_n\ge n/2$.
If $\sum_{i=1}^{n/2}d_i \le (\sqrt{2-(4/n)}-0.5) \tfrac{n^2}{2}$,
 then the sequence $(d_1, d_2, \ldots, d_n)$ can realize $\mmax$.
\end{corollary}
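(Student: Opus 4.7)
The plan is to invoke Theorem \ref{t.SpecialMatch}. The two parity requirements there come for free: $n$ is even by hypothesis, and $\sum_{i=1}^n d_i$ is even because the sequence is graphic. The task then reduces to verifying (\ref{EGsumMatch}) for every $k \in [n]$, and I would split this verification into two regimes depending on how $k$ compares to $n/2$.

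For $k \leq n/2 - 1$, the hypothesis $d_n \geq n/2$ ensures $d_i - 1 \geq n/2 - 1 \geq k$ for every $i$ and $d_{k+1} \geq k$, so every truncation appearing on the right-hand side of (\ref{EGsumMatch}) collapses to $k$. A direct expansion in both parities gives RHS $= k(n-1)$, while the LHS satisfies $\sum_{i=1}^k d_i \leq k(n-1)$ because each $d_i \leq n-1$. Hence (\ref{EGsumMatch}) is immediate in this regime, and the real work concerns $k \geq n/2$.

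For $k \geq n/2$, I would bound the two sides separately. Write $B = \sum_{i=1}^{n/2} d_i$. Monotonicity of the sequence gives $d_{n/2} \leq 2B/n$, and since $d_i \leq d_{n/2}$ for $i \geq n/2$, we obtain
\[
\sum_{i=1}^k d_i \;\leq\; B + (k - n/2) \cdot \tfrac{2B}{n} \;=\; \tfrac{2kB}{n}.
\]
Setting $\alpha := \sqrt{2-4/n} - 1/2$, the hypothesis rewrites as $B \leq \alpha n^2/2$, so $\sum_{i=1}^k d_i \leq \alpha n k$. For the right-hand side of (\ref{EGsumMatch}), the bound $d_i \geq n/2$ yields $\min\{d_i - 1, k\} \geq n/2 - 1$ and $\min\{d_{k+1}, k\} \geq n/2$. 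Assembling these, in both parities one has RHS $\geq k(k-1) + (n-k)(n/2-1)$ (the odd-$k$ bound is in fact larger by $1$). Thus it suffices to verify
\[
\alpha n k \;\leq\; k(k-1) + (n-k)(n/2-1) \quad\text{for every } n/2 \leq k \leq n.
\]

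This is the quadratic-in-$k$ inequality $k^2 - n(\alpha + 1/2)\,k + n(n/2 - 1) \geq 0$. By the choice of $\alpha$ we have $\alpha + 1/2 = \sqrt{2-4/n}$, so the discriminant equals
\[
n^2(\alpha + 1/2)^2 - 4n(n/2-1) \;=\; n^2(2-4/n) - 2n^2 + 4n \;=\; 0.
\]
Hence the quadratic is a non-negative perfect square in $k$, and the inequality holds for every $k$, with equality only at $k^\ast = n\sqrt{2-4/n}/2 \in [n/2, n]$. The main obstacle is spotting the right upper bound $\sum_{i=1}^k d_i \leq 2kB/n$: the naive bound $k(n-1)$ is too weak to expose the quadratic whose discriminant vanishes exactly at $\sqrt{2-4/n}-1/2$, which is precisely why this constant is the threshold appearing in the corollary.
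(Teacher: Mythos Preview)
Your argument is correct and follows essentially the same route as the paper: split into $k\le n/2-1$ (where both sides reduce to $k(n-1)$) and $k\ge n/2$, bound the left side by $k\overline{d}$ with $\overline{d}=2B/n$, bound the right side below by $k(k-1)+(n-k)(n/2-1)$, and then analyze the resulting quadratic in $k$. The only cosmetic difference is that you substitute the extremal value $\overline{d}=\alpha n$ up front and observe that the discriminant vanishes, whereas the paper keeps $\overline{d}$ as a parameter and computes the vertex value $f(k_0)$; these are the same computation.
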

\setcounter{theorem}{6}

\begin{proof}
Let $\pi=(d_1, d_2, \ldots, d_n)$ be a graphic sequence, with $n$ an even integer and $d_n\ge n/2$.
It suffices to show that  $\pi$ satisfies condition~\eqref{EGsumMatch}. Let $k\in [n]$ be any integer.
As $d_n\ge n/2$, it is easy to see that \eqref{EGsumMatch} is satisfied when $k \le n/2-1$: in this case the right-hand-side of \eqref{EGsumMatch} is exactly $k(k-1)+(n-k)k=k(n-1)$, which is trivially an upper bound on the left-hand-side of \eqref{EGsumMatch}. Thus we assume $k\ge n/2$. It suffices to show that
$$\sum_{i=1}^k d_i \le k(k-1)+(n-k)(n/2-1).$$
Let $\overline{d}=\left(\sum_{i=1}^{n/2} d_i\right)/(n/2)$. Then $\overline{d}\geq \left(\sum_{i=1}^{k} d_i\right)/k$, since $k\ge n/2$. So it suffices to prove that
$$\overline{d}k \le k(k-1)+(n-k)(n/2-1).$$
Let $f(k)=k^2-(\overline{d}+n/2)k+n(n/2-1)$. Then $f(k)$ is a quadratic and concave up function, with a minimum at $k_0=\frac{1}{2}(\overline{d}+n/2)$. It remains only to show that $f(k_0)\geq 0$, to which end we compute as follows:
\begin{eqnarray*}
    f(k_0)&=& k_0^2-2k_0^2+n(\tfrac{n}{2}-1) \\
     & =&-k_0^2+n(\tfrac{n}{2}-1)\\
     &=& -\tfrac{1}{4}\overline{d}^2-\tfrac{n}{4}\overline{d}+(\tfrac{7n^2}{16}-n).
\end{eqnarray*}
Using the quadratic formula we find that $f(k_0)\geq 0$ provided that $$\overline{d} \leq n(\sqrt{2-(4/n)}-0.5),$$
which corresponds exactly to our condition on $\sum_{i=1}^{n/2} d_i$.
\end{proof}

\vspace*{.1in}

We now provide an example to show that the bound of $$\sum_{i=1}^{n/2}d_i \le (\sqrt{2-(4/n)}-0.5) \tfrac{n^2}{2}$$ in Corollary \ref{corMatch} is best possible up to an additive constant. To this end, let $n>2$ be an even integer and define:
$$d^*=\left\lfloor (\sqrt{2}-\tfrac{1}{n}-0.5)n\right\rfloor$$
and
$$k^*=\left\lfloor\tfrac{1}{2}(d^*+n/2+1)\right\rfloor.$$
Consider the sequence
$$\pi^*=(d^*, d^*, \ldots, d^*, n/2, n/2, \ldots, n/2),$$
whose initial $k^*$ entries are all $d^*$ and remaining $n-k^*$ entries are all $n/2$. Since $n\geq 3$ we get that $d^*, k^*\geq n/2$. We will show that $\pi^*$ is a degree sequence that does not realize $\mmax$. This will give our desired example, since the sum of the first $n/2$ entries in $\pi^*$ is $d^* \cdot \tfrac{n}{2}$
$= (\sqrt{2-(4/n)} - 0.5 ) \frac{n^2}{2}-O(1)$.

We first show that $\pi^*$ is a degree sequence. To this end it suffices to show that $\pi^*$ satisfies the Erd\H{o}s-Gallai condition (i.e.\ (\ref{EGsum}) of Theorem \ref{EG}) for all $k$. When $k\leq n/2$ the right-hand side of the condition is exactly $k(k-1)+(n-k)k=k(n-1)$ which is trivially an upper bound on the left-hand side, so (\ref{EGsum}) holds. When $k > n/2$ it suffices to show that
$$kd^* \le k(k-1)+(n-k)(n/2).$$
Equivalently, we show that $f(k):=k^2-(d^*+n/2+1)k+n^2/2 \ge 0$.
Then $f(k)$ is a quadratic and concave up function, with a minimum at
$k_0:=\frac{1}{2}(d^*+n/2+1)$ (very similarly to the minimum of $f(k)$ in the proof of Corollary \ref{corMatch}). Since $f$ need only take integer values however, it suffices for us to to verify that $f(\lfloor k_0\rfloor)\geq 0$ (noting that $f(\lfloor k_0\rfloor)=f(\lceil k_0\rceil)$), and we need this extra degree of precision here. Let $\beta =k_0-\lfloor k_0\rfloor$. Then:
\begin{eqnarray*}
    f(\lfloor k_0\rfloor)=f(k_0-\beta) &=& (k_0-\beta)^2-2k_0(k_0-\beta) +n^2/2 \\
     &=& -k_0^2+\beta^2 +n^2/2 =-(\tfrac{1}{2}(d^*+n/2+1))^2+\beta^2 +n^2/2.
\end{eqnarray*}
Since $d^*=\left\lfloor (\sqrt{2}-\tfrac{1}{n}-0.5)n\right\rfloor,$ we let $\alpha =n(\sqrt{2}-\tfrac{1}{n}-0.5) -\left\lfloor n(\sqrt{2}-\tfrac{1}{n}-0.5)\right\rfloor$. Then we can continue computing as follows:
\begin{eqnarray*}
     f(\lfloor k_0\rfloor) &=& -\tfrac{1}{4}\left(n(\sqrt{2}-\tfrac{1}{n}-0.5)-\alpha +n/2+1\right)^2+n^2/2 +\beta^2\\
     &=& \tfrac{1}{4} (n\sqrt{2}-\alpha)^2+\beta^2 \ge 0,
\end{eqnarray*}
as desired.

We now show that $\pi^*$ does not satisfy~\eqref{EGsumMatch} by showing that \eqref{EGsumMatch} fails for $k=k^*$. This means we must show that
$$k^*d^*> k^*(k^*-1)+(n-k^*)(n/2-1)+1$$
if $k^*$ is odd, and the same inequality with one less on the right-hand side if $k^*$ is even.  So it suffices to show that
$$0> (k^*)^2-k^* (d^*+\tfrac{n}{2})+n(\tfrac{n}{2}-1)+1.$$
Let $g(k^*)=(k^*)^2-k^* (d^*+\tfrac{n}{2})+n(\tfrac{n}{2}-1)+1$, let $k_1 =\frac{1}{2}(d^*+n/2) $ and let $k^*=k_1+\gamma$ for some $-1<\gamma<1$. Then:
\begin{eqnarray*}
    g(k^*) =g(k_1+\gamma)&=& (k_1+\gamma)^2-(k_1+\gamma)(2k_1)+n(\tfrac{n}{2}-1)+1\\
    &=&-k_1^2+\gamma^2 +\tfrac{n^2}{2}-n+1 \\
    &=&-\tfrac{1}{4}(d^*+n/2)^2 +\gamma^2 +\tfrac{n^2}{2}-n+1\\
    &=& -\tfrac{1}{4}\left(n(\sqrt{2}-\tfrac{1}{n}-0.5)-\alpha+n/2\right)^2 +\gamma^2 +\tfrac{n^2}{2}-n+1\\
    &=& -\tfrac{1}{4}\left(n\sqrt{2}-(\alpha+1)\right)^2 +\gamma^2 +\tfrac{n^2}{2}-n+1\\
     &=&  -\tfrac{1}{4}(\alpha+1)^2 +\tfrac{\sqrt{2}}{2}(\alpha+1)n +\gamma^2-n+1.
\end{eqnarray*}
There are infinitely many values of $n\geq 3$ for which $\alpha \leq \tfrac{1}{4}$. For those choices of $n$, we get  that
$$g(k^*) < (.9)n -n+2\leq 0,$$
as desired.

\section{Packing of graphic n-tuples}

The binding number $\bi(G)$ of a graph $G$ is  defined as
$$\min \left\{\frac{|N_G(X)|}{|X|}: X\subseteq V(G), N_G(X) \ne V(G) \right\}.
$$
For a graph $G$ and a function $f: V(G)\rightarrow \mathbb{N}$, an \emph{$f$-factor} of $G$ is a spanning subgraph $H$ of $G$ satisfying $d_H(v)=f(v)$ for each $v\in V(G)$. The following result will be helpful for us.

\begin{theorem}[Kano and Tokushige \cite{KT}]\label{thm:f-factor}
Let $a$ and $b$ be integers such that $1\le a\le b$ and $b\ge 2$, and let $G$ be a connected simple graph with order $n$ with $n\ge \frac{(a+b)^2}{2}$. Let $f:  V(G)  \rightarrow  [a,b]$  be a function such that
$\sum_{v\in V(G)}f(v) \equiv 0 \pmod{2}$. If one of the following two conditions is satisfied,
then G has an $f$-factor.
\begin{itemize}
    \item  $\bi(G) \ge \frac{(a+b-1)(n-1)}{an-(a+b)+3}$.
    \item $\delta(G) \ge \frac{bn-2}{a+b}$.
\end{itemize}
\end{theorem}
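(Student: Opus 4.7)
The plan is to derive Theorem~\ref{thm:f-factor} from Lov\'asz's classical $f$-factor theorem, which states that $G$ has an $f$-factor if and only if for every pair of disjoint $S, T \subseteq V(G)$,
$$\delta_f(S, T) := \sum_{v \in S} f(v) - \sum_{v \in T} f(v) + \sum_{v \in T} d_{G-S}(v) - q_f(S, T) \;\ge\; 0,$$
where $q_f(S, T)$ counts the components $C$ of $G - (S \cup T)$ for which $e_G(V(C), T) + \sum_{v \in V(C)} f(v)$ is odd. Suppose for contradiction that $G$ has no $f$-factor, and fix a pair $(S, T)$ minimizing $\delta_f$. The parity assumption $\sum_{v} f(v) \equiv 0 \pmod{2}$ forces $\delta_f(S, T) \le -2$, and the strategy is to show that each of the two hypotheses rules this out.

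The setup common to both parts uses $f(v) \in [a, b]$ to bound
$$\delta_f(S, T) \;\ge\; a\,|S| - b\,|T| + \sum_{v \in T} d_{G-S}(v) - q_f(S, T),$$
and introduces the set $X$ equal to the union of the vertex sets of the components counted by $q_f$, so that $N_G(X) \setminus X \subseteq S \cup T$ and $q_f(S, T) \le |X|$.

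For part (b), the hypothesis $\delta(G) \ge \frac{bn-2}{a+b}$ gives $\sum_{v \in T} d_{G-S}(v) \ge |T|\,(\delta(G) - |S|)$. Plugging this in, together with the crude bound $q_f(S, T) \le n - |S| - |T|$ and the order assumption $n \ge (a+b)^2/2$, reduces the problem to a concave--up quadratic inequality in $|S|+|T|$ which holds at its minimum; this case is essentially bookkeeping, analogous in spirit to the proof of Corollary~\ref{corMatch}.

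For part (a), I would first dispose of $X = \emptyset$ (in which case $q_f = 0$ and $\delta_f \ge 0$ is immediate). In the clean subcase where every component counted by $q_f$ is a singleton, one has $N_G(X) \subseteq S \cup T$, so the binding number yields $q_f \le |X| \le (|S|+|T|)/\bi(G)$, and the specific form $\bi(G) \ge \frac{(a+b-1)(n-1)}{an-(a+b)+3}$ is engineered precisely so that clearing denominators in the resulting inequality collapses to $\delta_f(S,T) \ge 0$. The main obstacle is the presence of large odd components, where $N_G(X)$ may intersect $X$ and the naive bound on $|N_G(X)|$ breaks down; one has to exploit that a component of size $m$ contributes only $1$ to $q_f$ but $m$ to $|X|$, and apply the binding--number inequality to a carefully chosen subset (for instance, an independent transversal of the components, or singletons extracted from them). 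Matching the delicate coefficients of the binding--number bound to the deficit is where most of the technical work lies, and the order hypothesis $n \ge (a+b)^2/2$ is needed precisely to rule out small pathological configurations that would otherwise saturate the bound.
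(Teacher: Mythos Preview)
The paper does not prove Theorem~\ref{thm:f-factor}; it is quoted from Kano and Tokushige~\cite{KT} and used as a black box in the proof of Conjecture~\ref{pack}. There is therefore no proof in the paper to compare your proposal against.

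That said, your outline follows the standard route for results of this type (reducing to Lov\'asz's $f$-factor criterion and estimating the deficiency), and this is indeed the framework of the original Kano--Tokushige argument. But as a self-contained proof your proposal is incomplete. For part~(b) you wave at ``a concave--up quadratic inequality'' without writing it down or checking the endpoints; in particular the crude bound $q_f(S,T)\le n-|S|-|T|$ is generally too weak on its own, and one usually needs to separately handle the case $T=\emptyset$ (where the minimum-degree hypothesis gives no information about $\sum_{v\in T}d_{G-S}(v)$) and to use connectedness of $G$ to control $q_f$ more tightly. For part~(a) you correctly identify the main difficulty---large odd components may force $N_G(X)$ to meet $X$, so the binding-number inequality cannot be applied to $X$ directly---but you then stop exactly at the point where the real work begins, saying only that one should apply the bound to ``a carefully chosen subset'' and that ``matching the delicate coefficients\ldots is where most of the technical work lies.'' That is an accurate description of the obstacle, not a resolution of it; the specific constant $\frac{(a+b-1)(n-1)}{an-(a+b)+3}$ arises from a rather intricate case analysis in~\cite{KT}, and nothing in your sketch indicates how to recover it.
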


We confirm Conjecture \ref{pack} as follows.

\begin{theorem}
   Let $n\ge 3$ be an integer, and let $(d_1^1, \ldots, d_n^1)$ and $(d_1^2, \ldots, d_n^2)$
   be two graphic sequences with $d_n^1, d_n^2\ge 1$.  If $d_1^1 d_1^2  <\frac{n}{2}$, then
   $(d_1^1, \ldots, d_n^1)$ and $(d_1^2, \ldots, d_n^2)$  pack.
\end{theorem}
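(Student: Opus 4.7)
The plan is to fix a realization of one of the two sequences and then locate the other as a spanning subgraph of its complement. Without loss of generality assume $d_1^1\le d_1^2$, so that $(d_1^1)^2\le d_1^1 d_1^2<n/2$. Let $G_2$ be any realization of $(d_1^2,\ldots,d_n^2)$ on the vertex set $[n]$ with $\deg_{G_2}(i)=d_i^2$ for each $i$, and seek $G_1$ as an $f$-factor of $\overline{G_2}$ with $f(i)=d_i^1$. Any such $G_1$ lies inside $\overline{G_2}$ and so is automatically edge-disjoint from $G_2$ while realizing the first sequence.

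To produce the $f$-factor, we apply Theorem~\ref{thm:f-factor} to $\overline{G_2}$ with $a=d_n^1\ge 1$ and $b=d_1^1$, deferring the degenerate case $b=1$ to the end. The order hypothesis $n\ge(a+b)^2/2$ follows from $(a+b)^2/2\le 2b^2\le 2 d_1^1 d_1^2<n$. The function $f$ takes values in $[a,b]$, and $\sum_i f(i)=\sum_i d_i^1$ is even since the first sequence is graphic. The bound $d_1^2<n/2$ gives $\delta(\overline{G_2})=n-1-d_1^2\ge(n-1)/2$, so $\overline{G_2}$ is connected. The minimum-degree condition $\delta(\overline{G_2})\ge(bn-2)/(a+b)$ is strongest when $a=1$, so it suffices to verify $(n-1-d_1^2)(1+b)\ge bn-2$, which rearranges to $n+1\ge b+d_1^2+b d_1^2$. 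The elementary identity $(b-1)(d_1^2-1)\ge 0$ yields $b+d_1^2\le b d_1^2+1$, so $b+d_1^2+b d_1^2\le 2 b d_1^2+1<n+1$ by $b d_1^2=d_1^1 d_1^2<n/2$, as required.

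The degenerate case $b=d_1^1=1$ forces $d_i^1=1$ for all $i$, so the first sequence requires a perfect matching, forcing $n$ even. Since $d_1^2<n/2$ and $n$ is even, $d_1^2\le n/2-1$, giving $\delta(\overline{G_2})\ge n/2$, so Dirac's theorem yields a Hamilton cycle in $\overline{G_2}$; alternating its edges produces the needed perfect matching. The main subtlety is the direction of the reduction: had we instead fixed $G_1$ and sought $G_2\subseteq\overline{G_1}$, the order hypothesis $n\ge(a+b)^2/2$ would fail whenever $d_1^2$ is close to $n/2$. By swapping so that $d_1^1\le d_1^2$, both $n\ge(a+b)^2/2$ and the minimum-degree condition of Theorem~\ref{thm:f-factor} follow from the single input $d_1^1 d_1^2<n/2$ together with the identity $(x-1)(y-1)\ge 0$.
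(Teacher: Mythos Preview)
Your proof is correct and follows the same overall architecture as the paper: realize the sequence with the larger maximum degree directly, then find the other sequence as an $f$-factor of the complement via the Kano--Tokushige theorem (Theorem~\ref{thm:f-factor}). The labeling differs (the paper assumes $d_1^1\ge d_1^2$ and sets $a=1$, $b=d_1^2$), but the substance is the same.

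The one genuine difference is \emph{which} hypothesis of Theorem~\ref{thm:f-factor} is invoked. The paper uses the binding-number criterion, bounding $|N_{\overline G}(X)|/|X|$ from below and then checking an auxiliary polynomial inequality. You instead use the minimum-degree criterion, which reduces to the single line $n+1\ge b+d_1^2+bd_1^2$ and is dispatched by the AM--GM-flavored identity $(b-1)(d_1^2-1)\ge 0$. This is shorter and more transparent than the binding-number computation; indeed the same inequality $n+1\ge d_1^1+d_1^2+d_1^1d_1^2$ would also have sufficed for the paper's choice of $a,b$, so the binding-number route is not actually needed. The trade-off is that your reduction to $a=1$ hides a small monotonicity check (that $(bn-2)/(a+b)$ decreases in $a$), which you state but do not spell out; this is routine and does not affect correctness.
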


\begin{proof}
By symmetry, we may assume that $d_1^1\ge d_1^2$. Let $G$ be a realization of $(d_1^1, \ldots, d_n^1)$ on $\{v_1, \ldots, v_n\}$.

Consider first the case that $d_1^2=\cdots = d_n^2=1$. Here it suffices to show that $\overline{G}$, which has degree sequence $(n-d_1^1, \ldots, n-d_n^1)$, contains a 1-factor. Note that $n$ is even, since $(d_1^2, \ldots, d_n^2)=(1, \ldots, 1)$ is graphic. Since $d_1^1<n/2$, the minimum degree in $\overline{G}$ is at least $n/2$, so we know that $G$ has a 1-factor by a corollary of Theorem \ref{Lov} and Theorem \ref{EG} (as discussed in the introduction).

We may now assume that $d_1^2\ge 2$. It suffices to show that $\overline{G}$
has an $f$-factor such that $f(v_i) =d_i^2$. We will aim to apply Theorem \ref{thm:f-factor} with $a=1$ and $b=d_1^2\ge 2$. Note that $\sum_{v\in V(G)}f(v) \equiv 0 \pmod{2}$ since $(d_1^2, \ldots, d_n^2)$ is graphic. By assumption, $b< \sqrt{n/2}$, and so
$$\tfrac{(a+b)^2}{2}< (1+\sqrt{n/2})^2/2 \leq n,$$
with the last inequality following since $n\geq 3$. Theorem~\ref{thm:f-factor} will therefore give our desired conclusion provided that 
$$\bi(\overline{G}) \ge \frac{(a+b-1)(n-1)}{an-(a+b)+3} =\frac{b(n-1)}{n-b+2}.$$

Let $X\subseteq V(\overline{G})$ with $N_{\overline{G}}(X) \ne V(\overline{G})$. So there is a vertex $y\not\in N_{\overline{G}}(X)$ which must have all of its neighbours outside of $X$ in $\overline{G}$. By definition of $\overline{G}$, we know that $\delta(\overline{G})=n-1-d_1^1 >n-1- \frac{n/2}{b}$. Hence we must have $|X|< \frac{n}{2b}$. Using this, we get
$$
\frac{|N_{\overline{G}}(X)|}{|X|} > \frac{n-1- \frac{n/2}{b}}{|X|}
> \frac{n-1- \frac{n/2}{b}}{ \frac{n}{2b}}= \frac{2b(n-1)-n}{n}.
$$
It suffices to prove that:
\begin{eqnarray*}
&&\frac{2b(n-1)-n}{n} \geq  \frac{b(n-1)}{n-b+2} \\
&\Leftrightarrow& bn^2-2b^2n+4bn +2b^2-4b-n^2-2n \geq 0.
\end{eqnarray*}
To see this, by noting $ 2\le b< \sqrt{\frac{n}{2}}$, we get
\begin{eqnarray*}
    &&bn^2-2b^2n+4bn +2b^2-4b-n^2-2n \\
    &=& (bn^2-2b^2 n -n^2)+(4bn-2n)+(2b^2-4b) \\
    &\ge & (2n^2-n^2-n^2)+(8n-2n)+(4b-4b) \ge 0.
\end{eqnarray*}
\end{proof}

\section{Proof of Theorem \ref{t.extremal}}

Given two matchings $M, N \in \family$, we say that $N \preceq M$ if every degree sequence $(d_1, \ldots, d_n)$ which can realize $M$ can also realize $N$. Given this, we can restate Theorem \ref{t.extremal} equivalently as follows.

\begin{theorem}\label{t.extremaref} For any $M \in \family$,
$\mmin \preceq M \preceq \mmax.$
\end{theorem}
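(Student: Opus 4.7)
The plan is to prove the restated form $\mmin \preceq M \preceq \mmax$, where $N \preceq M$ means every labeled degree sequence realizing $M$ also realizes $N$. Both inequalities will follow from an iterated local swap operation: given a graph $G$ on $[n]$ with $\deg_G(i) = d_i$ containing a perfect matching $N$, we transform $G$ into a graph $G'$ with the same labeled degrees and a slightly modified matching $N'$.

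The key tool is the following Swap Lemma. Let $\{e_1, e_2\} \subseteq N$ be two matching edges with combined endpoints $a < b < c < d$. Then there exists $G'$ realizing $(d_1, \ldots, d_n)$ and containing $N' = (N \setminus \{e_1, e_2\}) \cup \{e_1', e_2'\}$ whenever the new pairing $\{e_1', e_2'\}$ is ``more nested'' than the old, that is: (i) from side-by-side $\{(a,b),(c,d)\}$ to either crossing $\{(a,c),(b,d)\}$ or nested $\{(a,d),(b,c)\}$; or (ii) from crossing $\{(a,c),(b,d)\}$ to nested $\{(a,d),(b,c)\}$. To prove the lemma I would first attempt the 2-switch that deletes $e_1, e_2$ from $G$ and inserts $e_1', e_2'$; this preserves all labeled degrees and suffices whenever the new edges are absent from $E(G)$. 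When one or both new edges already lie in $E(G)$, I would use the degree monotonicity $d_a \geq d_b \geq d_c \geq d_d$ (arising from $a<b<c<d$ and the weak decrease of the $d_i$) to locate an auxiliary vertex $x \notin \{a,b,c,d\}$ whose neighborhoods in $G$ permit a compensating chain of 2-switches, rerouting edges while leaving the other $n/2 - 2$ edges of $N$ intact.

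Given the Swap Lemma, both inequalities follow from a reachability argument in the directed graph whose nodes are $\family$ and whose arrows are the down-swaps. This DAG has unique source $\mmax$ (the only matching with all pairs side-by-side) and unique sink $\mmin$ (the only matching with all pairs nested); every matching $M \neq \mmax$ has an incoming arrow (un-nest any of its nested or crossing pairs) and every $M \neq \mmin$ has an outgoing arrow (nest any of its side-by-side or crossing pairs). For any $M$: backward traversal from $M$ terminates at $\mmax$ by finiteness, so reversing gives a forward down-swap chain $\mmax \to \cdots \to M$; iteratively applying the Swap Lemma along this chain shows $M \preceq \mmax$. Symmetrically, forward traversal from $M$ terminates at $\mmin$, giving $\mmin \preceq M$. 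The \textbf{main obstacle} will be the auxiliary-vertex case analysis inside the Swap Lemma, which requires an alternating-path argument to guarantee the compensating 2-switch chain without disturbing the remaining $n/2 - 2$ matching edges.
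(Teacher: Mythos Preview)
Your proposal is essentially the paper's own proof. Your Swap Lemma is exactly the paper's Lemma~\ref{l.monovariant}(b), your three ``more nested'' moves are the paper's switches of types (1), (3), and (2), and your source/sink reachability argument is the paper's Claims~\ref{eeprime} and its $\mmax$ analog. Two remarks: (i) you call the directed graph a DAG and invoke ``finiteness'' for termination, but you never justify acyclicity; the paper handles this with the explicit monovariant $\phi(M)=\sum_{(u,v)\in M}2^{u+v}$, which strictly decreases under every switch (Lemma~\ref{l.monovariant}(a)), and you will want something equivalent. (ii) The ``main obstacle'' you anticipate---the auxiliary-vertex case of the Swap Lemma---is lighter than you suggest: the paper disposes of it in two short paragraphs, since if both target edges are present you are already done, if both are absent a single 2-switch on $\{a,b,c,d\}$ works, and in the remaining case one degree comparison ($d_x\ge d_y$ or $d_w\ge d_z$) produces a single auxiliary vertex $q$ and a single 2-switch suffices, with the matching property of $N$ guaranteeing no other edge of $N$ is disturbed.
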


A \emph{preorder}  is a binary relation that is reflexive and transitive; note that  $\preceq$ is a preorder on $\family$.
Theorem \ref{t.extremaref} asserts that the preorder $(\family,\preceq)$ has \emph{minimum} and \emph{maximum} elements, respectively, $\mmin$ and $\mmax$.

We start our work towards Theorem \ref{t.extremaref} with a helpful example, namely the family $\mathcal{M}_4$. This family consists of three members:
\[
M^1=\{(1,4),(2,3)\}, \, M^2=\{(1,3),(2,4)\}, \text{ and }   M^3=\{(1,2),(3,4)\}.
\]
Theorems \ref{Lov} and \ref{EG} tell us which degree sequences of length four can realize at least one 1-factor, and we can manually check which of them can realize each of $M^1, M^2, M^3$. We get that the degree sequences are:
\begin{itemize}
    \item $(1,1,1,1),(2,2,2,2),(3,3,3,3)$, each of which can realize any of $M^1, M^2, M^3$ ;
    \item $(2,2,1,1),(3,3,2,2)$, each of which can realize $M^1$ or $M^2$ but not $M^3$; and
    \item $(3,2,2,1)$, which can realize $M^1$ but not $M^2$ or $M^3$.
\end{itemize}
This tells us that $M^1 \preceq M^2 \preceq M^3$. In particular,
this confirms Theorem \ref{t.extremaref} for $n=4$, with $M^1=M^{-}$ and $M^3=M^{+}$.

Given $N,M \in \family$, we say that $N$ is a \emph{switch} of $M$ if the symmetric difference $M \Delta N$
consists of the single 4-cycle $(w,x,y,z)$ for some $w, x, y, z \in[n]$ with $w < x < y < z$ and either:
\begin{enumerate}[(1)]
        \item  $M \setminus N = \{(w,x),(y,z)\}$ and $N \setminus M = \{(w,y),(x,z))\}$, or
        \item $M \setminus N = \{(w,y),(x,z)\}$ and $N \setminus M = \{(w,z),(x,y))\}$, or
        \item $M \setminus N = \{(w,x),(y,z)\}$ and $N \setminus M = \{(w,z),(x,y))\}$;
    \end{enumerate}
we say such switches are of \emph{type (1)}, \emph{type (2)}, or \emph{type (3)}, respectively.
We can think of $\mathcal{M}_4$ as providing canonical examples of switches: in $\mathcal{M}_4$, a switch of type (1) corresponds to moving from $M^3$ to $M^2$; a switch of type (2) corresponds to moving from $M^2$ to $M^1$, and; a switch of type (3) corresponds to moving from $M^3$ to $M^1$. See Figure \ref{M4}. In general, a switch of type (3) can always be obtained by a switch of type (1) followed by a switch of type (2).

\begin{figure}[htb]
    \centering
    \includegraphics[height=5.5cm]{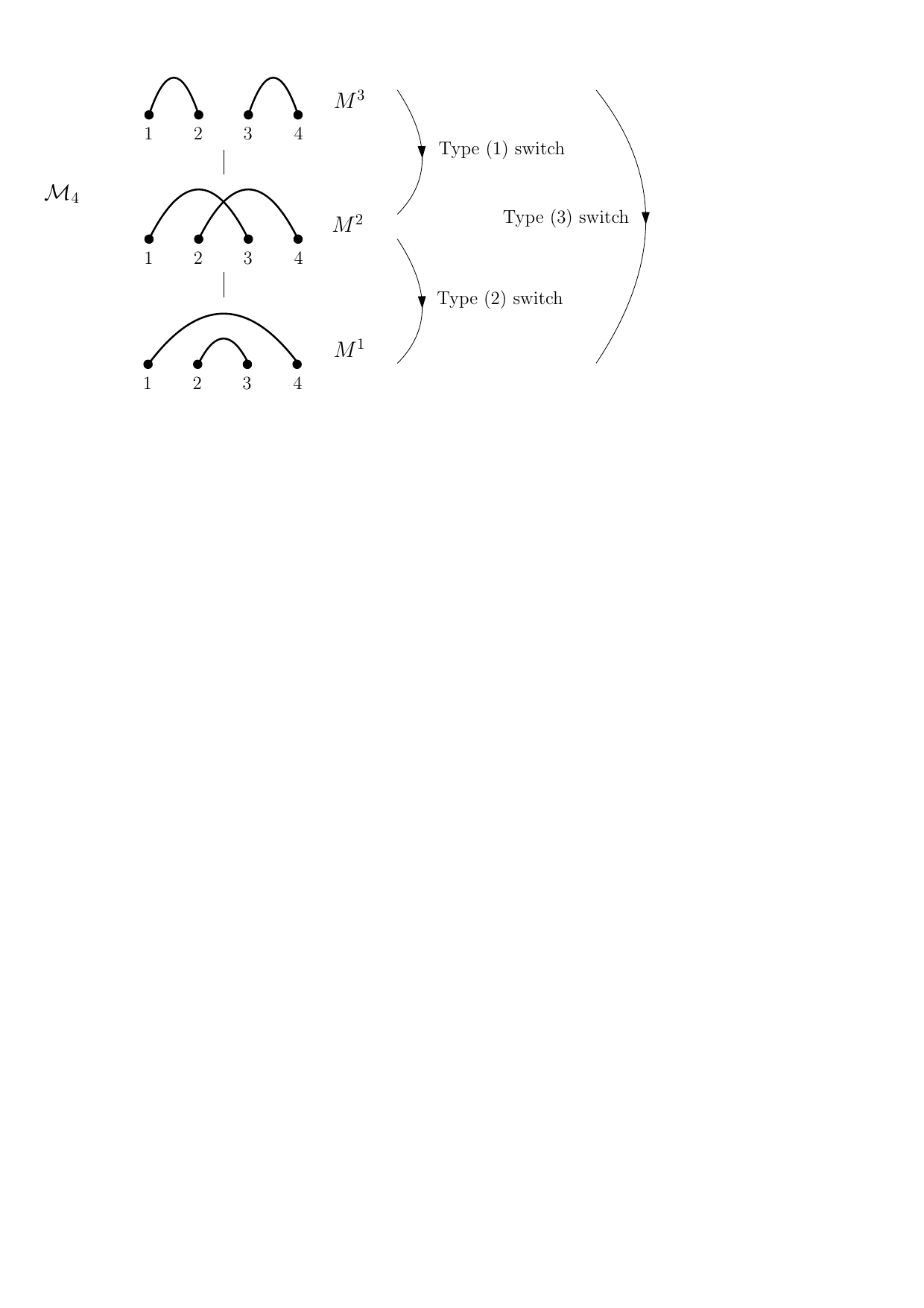}
    \caption{A depiction of $(\mathcal{M}_4, \preceq)$ and canonical examples for our three types of switches.}
    \label{M4}
\end{figure}

For any (not necessarily perfect) matching $M$ on $[n]$, define $\phi(M)$ by
\[
\phi(M):= \sum_{(u,v) \in M} 2^{u+v}.
\]
We prove that ``switches make $\phi$ smaller'', and that ``switches make the matching go down in the preorder''.

\begin{lemma}\label{l.monovariant}
    Let $N, M \in\family$, and suppose that $N$ is a switch of $M$. Then
    \begin{enumerate}
    \item[(a)]  
    $\phi(N)<\phi(M)$, and;
    \item[(b)] 
$N \preceq M$.
\end{enumerate}
\end{lemma}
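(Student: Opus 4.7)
My plan for part (a) is a direct computation. In each of the three switch types, $\phi(M) - \phi(N)$ is a signed sum of four powers of two that telescopes neatly. For type (1), we get
\[
\phi(M) - \phi(N) = 2^{w+x} + 2^{y+z} - 2^{w+y} - 2^{x+z} = (2^w - 2^z)(2^x - 2^y),
\]
which is positive because $w < z$ forces $2^w - 2^z < 0$ and $x < y$ forces $2^x - 2^y < 0$. For types (2) and (3), the analogous factorizations are $(2^w - 2^x)(2^y - 2^z)$ and $(2^w - 2^y)(2^x - 2^z)$, respectively, and both are positive by the ordering $w < x < y < z$.

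For part (b), my plan is to construct the desired realization by a short sequence of 2-switches applied to any given realization $G$ of $d$ containing $M$. First, I observe that a type-(3) switch from $M$ to $N$ equals a type-(1) switch from $M$ to the intermediate matching $M_1 := (M \cap N) \cup \{(w, y), (x, z)\}$ composed with a type-(2) switch from $M_1$ to $N$, so by transitivity of $\preceq$ it suffices to handle types (1) and (2). I focus on type (1); type (2) is entirely parallel. Given $G$ with $M \subseteq E(G)$, I split into three cases based on $E(G) \cap \{(w,y),(x,z)\}$. If this intersection is empty, the direct 2-switch $\{(w,x),(y,z)\} \leftrightarrow \{(w,y),(x,z)\}$ produces a realization $G'$ of $d$ containing $N$. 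If both diagonal edges already lie in $E(G)$, then $N \subseteq E(G)$ and we take $G' := G$.

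The hard case, and the main obstacle, is when exactly one of $(w, y), (x, z)$ lies in $E(G)$ -- say $(w,y) \in E(G)$ but $(x,z) \notin E(G)$. Here the plan is to perform a preparatory 2-switch that removes the offending edge $(w,y)$ without creating $(x,z)$ or disturbing any edge of $M \cap N$; once executed, the first case applies and a second 2-switch finishes. Concretely, I need an edge $(a,b) \in E(G) \setminus (M \cap N)$ with $a, b \notin \{w, y\}$ such that one of the replacements $\{(w,a),(y,b)\}$ or $\{(w,b),(y,a)\}$ is disjoint from $E(G)$. The expected obstacle is proving that such $(a, b)$ always exists: if not, every edge of $G$ off $\{w, y\}$ must conform to a restrictive pattern with respect to the partition of $[n] \setminus \{w, y\}$ into $N_G(w) \setminus N_G(y)$, $N_G(y) \setminus N_G(w)$, $N_G(w) \cap N_G(y)$, and the non-neighbors of both. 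Combining this restriction with the sorted-decreasing hypothesis (in particular $d_w \geq d_x$ and $d_y \geq d_z$), together with $(w,x), (y,z) \in E(G)$ and $(x,z) \notin E(G)$, should yield a counting contradiction, thereby establishing the existence of $(a,b)$ and completing the proof.
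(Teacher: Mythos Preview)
Part (a) is correct; the factorizations $(2^w-2^z)(2^x-2^y)$, $(2^w-2^x)(2^y-2^z)$, $(2^w-2^y)(2^x-2^z)$ are a clean way to see positivity and amount to the same computation as the paper's chain of inequalities.

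For part (b), your reduction of type (3) to types (1)+(2) and your three-way case split on $E(G)\cap\{(w,y),(x,z)\}$ match the paper. The genuine gap is in the hard case. Your plan is to first delete the present diagonal $(w,y)$ by a $2$-switch against some edge $(a,b)\in E(G)\setminus(M\cap N)$ with $a,b\notin\{w,y\}$, and only then apply the ``neither diagonal present'' case. Such an edge need not exist, so the promised counting contradiction does not materialize. Take $n=8$, $(w,x,y,z)=(1,2,3,4)$, $M=\{(1,2),(3,4),(5,6),(7,8)\}$, and let $G$ have
\[
N_G(1)=\{2,3,4,5,6\},\quad N_G(2)=\{1,3,5,6,7\},\quad N_G(3)=\{1,2,4,5,6\},\quad N_G(4)=\{1,3,5,6,8\},
\]
\[
N_G(5)=\{1,2,3,4,6\},\quad N_G(6)=\{1,2,3,4,5\},\quad N_G(7)=\{2,8\},\quad N_G(8)=\{4,7\}.
\]
Then the degree sequence is $(5,5,5,5,5,5,2,2)$, $M\subseteq E(G)$, $(1,3)\in E(G)$, and $(2,4)\notin E(G)$. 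The only edges with both endpoints in $\{2,4,5,6,7,8\}$ outside $M\cap N=\{(5,6),(7,8)\}$ are $(2,5),(2,6),(2,7),(4,5),(4,6),(4,8)$; each has an endpoint in $N_G(1)\cap N_G(3)=\{2,4,5,6\}$, which forces \emph{both} replacement pairs $\{(1,a),(3,b)\}$ and $\{(1,b),(3,a)\}$ to meet $E(G)$. So no preparatory switch of your form is available.

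The paper handles this subcase without touching $(w,y)$ at all: since $d_x\ge d_y$ and $z\in N_G(y)\setminus N_G(x)$, there is $q\in N_G(x)\setminus N_G(y)$, and the single $2$-switch
\[
H \;=\; G-\{(x,q),(y,z)\}+\{(x,z),(y,q)\}
\]
already satisfies $N\subseteq E(H)$ (the removed edges $(x,q),(y,z)$ cannot lie in $N$, while $(w,y)$ and all of $M\cap N$ are untouched). In the example above $q=7$ works. The other subcase $(x,z)\in E(G)$, $(w,y)\notin E(G)$ is symmetric via $d_w\ge d_z$. Replacing your two-step preparatory plan with this one-step argument closes the gap.
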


\begin{proof}
    Let $w, x, y, z\in[n]$ with $w < x <  y < z$ be the four vertices involved in the switch.

    \noindent (a) We know that
    \[
    2^{w+x} + 2^{y +z} > 2^{w + y} + 2^{x+z}
                        > 2^{w+z} +2^{x+y},
    \]
    where the first inequality used $2 \cdot 2^{y+z-1} \geq 2^{w+y} + 2^{x+z}$ and the second inequality used
    $2 \cdot 2^{x+z-1} \geq 2^{w+z} + 2^{x+y}$. From this we deduce that $\phi(M \setminus N) > \phi(N \setminus M)$, as
    \begin{itemize}
        \item if $N$ is a switch of $M$ of type (1) then $\phi(M \setminus N) = 2^{w+x} + 2^{y+z}$ and $\phi(N \setminus M) = 2^{w+y} + 2^{x+z}$;
        \item if $N$ is a switch of $M$ of type (2) then $\phi(M \setminus N) = 2^{w+y} + 2^{x+z}$ and $\phi(N \setminus M) = 2^{w+z} + 2^{x+y}$; and
        \item if $N$ is a switch of $M$ of type (3) then $\phi(M \setminus N) = 2^{w+x} + 2^{y+z}$ and $\phi(N \setminus M) = 2^{w+z} + 2^{x+y}$.
    \end{itemize}
It follows that $\phi(M) = \phi(M \cap N) + \phi(M \setminus N)
    > \phi(M \cap N) + \phi(N \setminus M) = \phi(N)$.

\noindent (b) We assume
    that $M \setminus N= \{(w,x),(y,z)\}$ and $N \setminus M = \{(w,y),(x,z)\}$ (the arguments for switches of types (2) and (3) are similar). Suppose that $(d_1, d_2, \dots, d_n)$ is a degree sequence which can realize $M$; we must show that it can also realize $N$.

   Suppose that $G$ realizes $(d_1, d_2, \dots, d_n)$ and contains $M$. Certainly, if $G$ already contains both edges $(w,y)$ and $(x,z)$ of $N \setminus M$, then $G$ already contains all of $N$, and we are done. In fact, if $G$ contains \emph{neither} $(w,y)$ nor $(x,z)$, then we can alternate along this 4-cycle to get $H=G - \{(w,x),(y,z)\} + \{(w,y),(x,z)\}$, and we are again done. So we may assume that $G$ has exactly one of $(w,y)$ and $(x,z)$.

Suppose first that $(w,y)$ is in $G$ but $(x,z)$ is not. Then $x \not\sim z \sim y$, yet $x < y$ implies $d_x \geq d_y$, so there must be some other vertex $q$ with $x \sim q \not\sim y$ in $G$. We now alternate along the cyclic sequence $xzyqx$ by letting $H=G -\{(x,q),(y,z)\} + \{(x,z),(y,q)\}$. Then $H$ also has degree sequence $(d_1, \dots, d_n)$ and now contains $(x,z)$. Since $N$ is a matching, the removed edges $(x,q)$ and $(y,z)$ are not in $N$, so as all of $N \setminus \{(x,z)\} \subseteq M \cup \{(w,y)\}$ was initially in $G$ we now have $N \subseteq E(H)$.

We may now assume that $(x,z)$ is in $G$ but $(w,y)$ is not. Then $w \not\sim y \sim z$, yet $w < z$ implies $d_w \geq d_z$, so there must be some $q$ with $w \sim q \not\sim z$ in $G$. In this case, $H:=G -\{(w,q),(y,z)\} + \{(w,y),(z,q)\}$ contains $N$ as desired.
\end{proof}

We can now prove our main result of this section, namely that $\mmin$ and $\mmax$ are the minimum and maximum elements of $(\family, \preceq)$, respectively.

\begin{proof} \emph{(Theorem \ref{t.extremaref})} Let $M \in \family$.
We will prove that
\begin{enumerate}
        \item[(i)]
$\mmin$ is obtainable from $M$ by a sequence of switches, and
\item[(ii)]   $M$ is obtainable from $\mmax$
    by a sequence of switches.
\end{enumerate}
By Lemma \ref{l.monovariant}(b) (and transitivity of $\preceq$), statements (i) and (ii) will imply that $M \preceq \mmax$, and $\mmin \preceq M$, desired.

We first show part (i). Starting with $N_1=M$, inductively construct a sequence of matchings $M=N_1, N_2, \dots$ in $\family$ as follows. Given $N_k$, if there is no matching which is a switch of $N_k$, the sequence terminates with $N_k$. Otherwise, we let $N_{k+1}$ be a switch of $N_k$ (of any kind).

By Lemma \ref{l.monovariant}(a), the sequence of positive integers $(\phi(N_j))_{j \geq 1}$
is strictly decreasing, and so the above sequence must terminate with some $N_k$ which has no switch. We argue that in fact $N_k=\mmin$. To do this, we associate to every edge $e=(w,x)$ with $w<x$ a corresponding interval $[w,x]$ of real numbers, which we denote by $I_e$.

\begin{claim}\label{eeprime}
    For any two edges $e,e'$ of $N_k$, either $I_e \subset I_{e'}$ or $I_{e'} \subset I_e$.
\end{claim}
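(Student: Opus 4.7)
The plan is a short case analysis by contradiction. I would take two distinct edges $e=(a,b)$ and $e'=(c,d)$ of $N_k$, written with $a<b$ and $c<d$, and assume without loss of generality that $a<c$. Since $N_k$ is a matching, the four endpoints $a,b,c,d$ are all distinct, and so the relative order of $b$ among $c,d$ leaves exactly three possibilities: (i) $a<b<c<d$ (disjoint intervals), (ii) $a<c<b<d$ (crossing intervals), or (iii) $a<c<d<b$ (in which case $I_{e'}\subset I_e$, the conclusion we want). The entire task is to rule out (i) and (ii) using only the hypothesis that $N_k$ admits no switch.

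In case (i), I would set $(w,x,y,z)=(a,b,c,d)$; then $w<x<y<z$ and $\{e,e'\}=\{(w,x),(y,z)\}$ is exactly the left-hand side of a type (1) switch, so replacing these two edges in $N_k$ by $\{(w,y),(x,z)\}$ produces a matching which is a type (1) switch of $N_k$, contradicting the termination of the sequence. For case (ii) the correct labelling is $(w,x,y,z)=(a,c,b,d)$; then $w<x<y<z$ and $\{e,e'\}=\{(w,y),(x,z)\}$, which is the left-hand side of a type (2) switch, so replacing these edges by $\{(w,z),(x,y)\}$ again yields a valid switch of $N_k$, another contradiction. In each case, matching-ness is preserved because the four vertices involved are not incident to any other edge of $N_k$. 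Only (iii) survives, giving the claim.

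The main obstacle, modest as it is, is keeping the relabellings straight: in case (ii) the ``alphabetical'' order $w<x<y<z$ of the four vertices does not match the naive edge pairing, so one must be careful to recognize $e,e'$ as the type (2) edges $(w,y),(x,z)$ rather than as $(w,x),(y,z)$. Once this bookkeeping is settled, the rest is immediate from the definition of switches of types (1) and (2); type (3) is not even needed here.
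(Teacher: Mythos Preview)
Your argument is correct and follows essentially the same route as the paper: assume the intervals are not nested, split into the disjoint and crossing cases, and in each case exhibit a switch of $N_k$ to contradict termination. The only cosmetic difference is that in the disjoint case you invoke a type~(1) switch (replacing $\{(w,x),(y,z)\}$ by $\{(w,y),(x,z)\}$) whereas the paper uses a type~(3) switch (replacing by $\{(w,z),(x,y)\}$), which lets them handle both cases with a single formula for $\tilde{N}$; either choice works since $N_k$ admits no switch of any type.
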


\begin{proofc}
Suppose to the contrary that $e,e'\in N_k$ are two edges for which $I_e$ and $I_{e'}$ are not nested. We will define a new matching $\tilde{N}$ according to whether $I_e$ and $I_{e'}$ intersect. In the case that $I_e$ and $I_{e'}$ are disjoint, without loss of generality, we may write $e=(w,x)$ and $e'=(y,z)$ where $w<x<y<z$.
Otherwise, $I_e$ and $I_{e'}$ intersect. As their endpoints are distinct (since $N_k$ is a matching), but they are not nested, the two intervals contain exactly one of each other's endpoints, so their corresponding edges $\{e,e'\}$ are of the form $\{(w,y),(x,z)\}$ where $w<x<y<z$. In either case, we let $\tilde{N}=N_k \setminus\{e,e'\} \cup \{(w,z),(x,y)\}$. Then $\tilde{N}$ is a switch of $N_k$ (of type (3) or (2) in the respective cases above), contradicting $N_k$ having no switch.
\end{proofc}

Note that Claim \ref{eeprime} implies that $N_k=\mmin$, concluding the proof of (i). The proof of (ii) is similar, but included for sake of completeness. We define a sequence of matchings $M=M^1, M^2, \dots$ in $\family$ such that each $M^j$ is a switch of $M^{j+1}$. Then $(\phi(M^j))_{j\geq 1}$ is strictly increasing by Lemma \ref{l.monovariant}, but bounded above (as there are only finitely many matchings in $\family$), so must terminate with some $M^\ell$ which is not a switch of any other matching. The following claim implies that $N_k=\mmax$, and hence completes our proof.

\begin{claim}
    For any $e,e' \in M_\ell$, the intervals $I_e$ and $I_{e'}$ are disjoint.
\end{claim}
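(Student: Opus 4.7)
My plan is to argue by contradiction, mirroring the proof of Claim \ref{eeprime} but with the direction of the switch relation reversed: I want to show that the terminating property of $M^\ell$ (namely, that no matching has $M^\ell$ as a switch) forces any two edges of $M^\ell$ to have disjoint intervals. Accordingly, I would suppose for contradiction that there exist $e, e' \in M^\ell$ with $I_e \cap I_{e'} \neq \emptyset$. Since $M^\ell$ is a matching, the four endpoints of $e$ and $e'$ are all distinct; labeling them $w < x < y < z$, the assumption that $I_e$ and $I_{e'}$ share a point forces $\{e, e'\}$ to be either the crossing pair $\{(w,y),(x,z)\}$ or the nested pair $\{(w,z),(x,y)\}$, since the only remaining configuration $\{(w,x),(y,z)\}$ produces disjoint intervals.

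In either case I would define $\tilde{N} := (M^\ell \setminus \{e, e'\}) \cup \{(w,x),(y,z)\}$, which is a perfect matching on $[n]$ because we merely swap two edges on the four-element vertex set $\{w,x,y,z\}$. If $\{e,e'\}=\{(w,y),(x,z)\}$, then $\tilde{N}\setminus M^\ell=\{(w,x),(y,z)\}$ and $M^\ell\setminus\tilde{N}=\{(w,y),(x,z)\}$, exhibiting $M^\ell$ as a type-(1) switch of $\tilde{N}$. If instead $\{e,e'\}=\{(w,z),(x,y)\}$, then $\tilde{N}\setminus M^\ell=\{(w,x),(y,z)\}$ and $M^\ell\setminus\tilde{N}=\{(w,z),(x,y)\}$, exhibiting $M^\ell$ as a type-(3) switch of $\tilde{N}$. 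Either way $M^\ell$ is a switch of another matching, contradicting the defining property of $M^\ell$.

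The only work involved is bookkeeping to align the three switch types from the definition with the three arrangements of two vertex-disjoint edges on four points, so I do not expect any substantial obstacle. (In the nested case one could equally well use $(M^\ell \setminus \{e,e'\}) \cup \{(w,y),(x,z)\}$ and invoke type (2); any one such $\tilde{N}$ suffices for the contradiction.) Once the claim is established, the subsequent step noted in the excerpt, deducing $M^\ell = \mmax$, follows by a straightforward induction on the smallest unmatched vertex: the edge incident to vertex $1$ must be $(1,2)$ in order to avoid sandwiching vertex $2$ inside an interval, and then the same argument applies to the remaining matching on $[n]\setminus\{1,2\}$.
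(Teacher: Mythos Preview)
Your proof is correct and essentially identical to the paper's: both argue by contradiction, split into the crossing and nested cases, replace $\{e,e'\}$ by $\{(w,x),(y,z)\}$, and observe that $M^\ell$ is then a switch (of type (1) or (3), respectively) of the resulting matching. The only differences are cosmetic (you call the new matching $\tilde{N}$ while the paper uses $\tilde{M}$, and you include the extra remark about deducing $M^\ell=\mmax$).
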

\begin{proofc}
Suppose for sake of contradiction $I_e \cap I_{e'} \neq \emptyset$, and let the set of endvertices of $e$ and $e'$ be $\{w,x,y,z\}$ where $w < x < y < z$. In the case that $I_e$ and $I_{e'}$ are nested, their corresponding edges $\{e,e'\}$ are of the form $\{(w,z),(x,y)\}$. Or, they contain exactly one of each other's endpoints, and so $\{e,e'\}=\{(w,y),(x,z)\}$. Either way, let $\tilde{M}=M_\ell \setminus\{e,e'\} \cup \{(w,x),(y,z)\}$. Then $M_\ell$ is a switch of $\tilde{M}$ (of type (3) or (1) respectively), again a contradiction.
\end{proofc}
\end{proof}

Observe that the above arguments gives rise to a polynomial-time (in fact $O(n^3)$) algorithm for constructing a realization of any $M\in\family$ given a realization of $\mmax$.

We close this section by making two conjectures about the preorder $(\family, \preceq)$.  Given $N, M \in\family$ with $N \preceq M$, it is not necessarily true that $N$ is a switch of $M$. But we believe that $N$ must be obtainable from $M$ via some sequence of switches.

\begin{conjecture}\label{c.sameposet}
Suppose $N, M \in \M_n$ have $N \preceq M$. Then there exists a sequence
\[
N=M_1 \preceq M_2 \preceq \dots \preceq M_{k-1} \preceq M_k=M
\]
such that $M_j$ is a switch of $M_{j+1}$, for each $j$.
\end{conjecture}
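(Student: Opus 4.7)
The plan is to reduce the conjecture to the following Key Lemma and then iterate it. \emph{Key Lemma.} If $N \preceq M$ in $\mathcal{M}_n$ with $N \neq M$, then there exists a switch $M \to M'$ with $N \preceq M'$. Granted this, the conjecture follows by a $\phi$-minimization argument: let $M^*$ minimize $\phi$ among matchings $M''$ that both satisfy $N \preceq M''$ and are reachable from $M$ by a (possibly empty) sequence of switches. If $M^* \neq N$, then the Key Lemma applied to $(N, M^*)$ produces a switch $M^* \to M^{**}$ with $N \preceq M^{**}$ and $\phi(M^{**}) < \phi(M^*)$ by Lemma \ref{l.monovariant}(a), contradicting the minimality of $M^*$; hence $M^* = N$, and the switches taking $M$ to $M^*$ form the required chain.

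To approach the Key Lemma, I would decompose $M \triangle N$ into even cycles alternating between $M$- and $N$-edges. In the principal case where $M \triangle N$ contains a $4$-cycle $C$ on some vertices $w<x<y<z$, I would take $M' := (M \setminus M|_C) \cup N|_C$, the result of swapping $M$'s pairing on $\{w,x,y,z\}$ for $N$'s. Two things must then be verified: (i) $M \to M'$ is a valid switch, i.e., $M|_C$ strictly exceeds $N|_C$ in the pairing order $\{wx,yz\} > \{wy,xz\} > \{wz,xy\}$; and (ii) $N \preceq M'$. For (i), the idea is to show that the opposite configuration on $C$ contradicts $N \preceq M$ by exhibiting an explicit witness degree sequence; heuristically, in the extreme case where $N|_C$ is separated and $M|_C$ is nested, a degree sequence like $(d,d,2,2,1,\ldots,1)$ concentrated on $C$ should realize $M$'s nested pairing but overload the common-neighbor quota for $N$'s separated pairing. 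For (ii), the plan is to adapt the alternating-cycle exchange arguments from Lemma \ref{l.monovariant}(b): starting from any realizer $G'$ of $M'$, process the non-$C$ cycles of $M \triangle N$ one at a time, each time performing a degree-preserving local swap in $G'$ that converts an $M$-edge on the current cycle into an $N$-edge.

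The main obstacle is the complementary case in which every cycle of $M \triangle N$ has length at least $6$, since no switch of $M$ then directly replaces an $(M \setminus N)$-edge by an $(N \setminus M)$-edge on the same four vertices. My plan here is a preparatory switch $M \to M'$ on four vertices drawn from a single $2k$-cycle of $M \triangle N$, chosen to be a valid type-(2) or type-(3) switch that splits the $2k$-cycle in $M' \triangle N$ into a $4$-cycle and a $(2k-2)$-cycle, reducing to the first case on the next iteration. The hardest point throughout is verifying $N \preceq M'$, since Lemma \ref{l.monovariant}(b) only supplies $M' \preceq M$, the opposite containment from what would chain directly with $N \preceq M$, so $N \preceq M'$ must be established by a fresh combinatorial argument. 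A backup strategy would be to sidestep the Key Lemma altogether by combinatorially characterizing the up-sets $\{M'' \in \mathcal{M}_n : N \preceq M''\}$, perhaps by extending to arbitrary $N$ the interval-nesting description of $\mmin$ from Claim \ref{eeprime}.
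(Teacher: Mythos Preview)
The statement you are attempting to prove is Conjecture~\ref{c.sameposet}, which the paper explicitly leaves open; there is no proof in the paper to compare your attempt against. The paper only observes that Conjecture~\ref{c.sameposet} would imply Conjecture~\ref{c.poset} via the $\phi$-monotonicity of Lemma~\ref{l.monovariant}(a). So your proposal should be read as an attack on an open problem, not a reconstruction of an existing argument.

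Your reduction to the Key Lemma is sound: if one can always find a switch $M\to M'$ preserving $N\preceq M'$, the $\phi$-minimization argument you describe does terminate at $N$. The difficulty is that your sketch of the Key Lemma does not close. For step~(i), you need that on any $4$-cycle $C$ of $M\triangle N$ the pairing $M|_C$ sits strictly above $N|_C$; but $N\preceq M$ is a \emph{global} condition, and nothing rules out $M\triangle N$ containing two $4$-cycles on which the local orders point in opposite directions. Your heuristic witness $(d,d,2,2,1,\dots,1)$ only tests a single cycle in isolation and says nothing about such compensation effects; indeed, if (i) always held locally then antisymmetry (Conjecture~\ref{c.poset}) would follow almost immediately, which the paper also leaves open. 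For step~(ii), you correctly identify that $M'\preceq M$ goes the wrong way to chain with $N\preceq M$, and the ``process the non-$C$ cycles'' plan is not an argument: Lemma~\ref{l.monovariant}(b) handles one $4$-cycle at a time under the hypothesis that the \emph{target} matching differs by a single switch, whereas here $N$ and $M'$ can differ on many long cycles simultaneously.

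The long-cycle case compounds this. Your preparatory switch splits a $2k$-cycle into a $4$-cycle and a $(2k-2)$-cycle in $M'\triangle N$, but again you must verify $N\preceq M'$ afresh, and there is no mechanism offered for doing so. In short, the proposal is a reasonable research outline with the right potential function and the right reduction, but the Key Lemma is where the entire content of the conjecture lives, and neither branch of your case analysis supplies a proof of it.
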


Note that Conjecture \ref{c.sameposet} is effectively a converse of Lemma \ref{l.monovariant}(b). If Conjecture \ref{c.sameposet} is true then it would also imply the following.

\begin{conjecture}\label{c.poset}
   The preorder $(\M_n, \preceq)$ is in fact a poset. That is, it is antisymmetric ($M \preceq N$ and $N \preceq M$ implies $M = N$).
\end{conjecture}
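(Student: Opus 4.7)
The plan is to prove Conjecture \ref{c.poset} directly, by showing that for every pair of distinct matchings $M, N \in \family$ some graphic degree sequence on $[n]$ is realized by exactly one of them. I would decompose $M \triangle N$ into its alternating cycles (each of even length $\geq 4$) and focus on one such cycle $C$. The base case $|V(C)|=4$ is already handled by the $\M_4$ analysis in the paper: on any four vertices $v_1 < v_2 < v_3 < v_4$, the degree assignments $(3,2,2,1)$ and $(2,2,1,1)$ together separate the three matchings pairwise, and this should extend to $[n]$ once the rest of the vertices are arranged appropriately. For $|V(C)| = 2k \geq 6$, I would proceed by induction on $k$, using a high-degree vertex $v \in V(C)$ to pin a specific $M$-edge at $v$ while excluding an incident $N$-edge; pinning reduces the situation to a shorter alternating cycle on $V(C) \setminus \{v, \text{partner}\}$, and induction applies.

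Two reductions need care. First, the sorted-degree convention $d_1 \geq \cdots \geq d_n$ ties low vertex labels to high degrees, so the degree-$1$ extension outside $V(C)$ (meant to keep the realization split) must be arranged so that outside vertices cannot borrow edges into $V(C)$; this requires the interior degrees on $V(C)$ to be saturated by edges within $V(C)$ only. Second, the pinned-edge induction relies on a graphic sequence in which some single vertex has enough degree to force a specific incident edge to lie in every realization. The natural obstruction is that graphic degree sequences on $V(C)$ generally admit many realizations, so excluding an entire matching $N|_{V(C)}$ is a much stronger demand than excluding a single edge, and the heavy-vertex trick likely needs to be iterated along the full cycle.

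A cleaner route, if attainable, would be to generalize Theorem \ref{t.SpecialMatch} to arbitrary $M \in \family$, producing matching-dependent Erd\H{o}s--Gallai-style inequalities $(\ast)_M$ that characterize realizability of $M$ (with $(\ast)_{\mmax}$ recovering Theorem \ref{t.SpecialMatch}). Distinct matchings should then yield distinct inequality systems, and a degree sequence separating $M \neq N$ could be read off. Alternatively, as noted by the authors, Conjecture \ref{c.sameposet} implies Conjecture \ref{c.poset}: by Lemma \ref{l.monovariant}(a) every switch strictly decreases $\phi$, so any cycle $M \preceq N \preceq M$ would force strict monotonicity along a closed chain, forcing $M=N$. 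The hard part in every route is the same: locating, for each ordered pair $(M,N)$ with $N \preceq M$ and $N \neq M$, a concrete witness --- either a distinguishing graphic sequence, or a progress-making switch $M \to M'$ with $N \preceq M'$ and $\phi(M') < \phi(M)$ --- and I expect the local structure of the alternating cycles in $M \triangle N$ to guide such a construction.
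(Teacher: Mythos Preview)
This statement is not proved in the paper; it is explicitly presented as an open conjecture. The only argument the paper supplies is the conditional one you already mention: Conjecture~\ref{c.sameposet} together with Lemma~\ref{l.monovariant}(a) would force antisymmetry, since $\phi$ strictly increases along any switch-chain and hence no nontrivial closed chain $M \preceq N \preceq M$ can exist. The paper does not prove Conjecture~\ref{c.sameposet}, so Conjecture~\ref{c.poset} remains open there as well.

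Your submission is likewise not a proof but a research plan, and you say so yourself (``the hard part in every route is the same''). The direct-separation strategy has the right shape --- for antisymmetry it suffices, given $M \neq N$, to exhibit one degree sequence realizing exactly one of them --- but the inductive step you sketch (pin an $M$-edge at a high-degree vertex of $C$, shorten the alternating cycle, recurse) is precisely where the content would lie, and it is not supplied. The obstruction you flag is real: forcing a single specified edge via degrees is easy, but doing so while keeping the remaining degrees sorted-consistent with the given vertex labels on $V(C)$ \emph{and} still flexible enough to recurse is not. The $\M_4$ base case does not transparently lift either, since the separating sequences $(3,2,2,1)$ and $(2,2,1,1)$ exploit the full label order on all four vertices, whereas an alternating cycle of $M \triangle N$ sits on an arbitrary subset of $[n]$ whose labels need not form an interval.

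In short, there is no paper proof to compare against, and your proposal does not close the gap. What you have written is a reasonable map of the available approaches --- essentially the same landscape the paper itself leaves open --- but none of the three routes is carried through.
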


To see that Conjecture \ref{c.sameposet} implies Conjecture \ref{c.poset}, let $M, N$ be matchings on $[n]$ with $N \preceq M \preceq N$. Then, assuming Conjecture \ref{c.sameposet},
\[
N=M_1 \preceq M_2 \preceq \dots \preceq M_{k-1} \preceq M_k=M \preceq M_{k+1} \preceq \dots \preceq M_{t-1} \preceq M_t=N
\]
for some sequence of ordered matchings $\{M_j\}$ such that $M_j$ is a switch of $M_{j+1}$, for each $j$. Suppose for sake of contradiction that $N \neq M$, so that the length $t$ of this sequence is strictly more than 1.
Then by $t-1$ applications of Lemma \ref{l.monovariant}(a),
\[
\phi(N)=\phi(M_1) < \phi(M_2) <  \dots < \phi(M_{t-1}) < \phi(M_t)=\phi(N),
\]
a contradiction.

It is easy to see that $(\M_4, \preceq)$ is antisymmetric, and hence is a poset (see Figure \ref{M4}). In fact $(\M_4, \preceq)$ is total order, although this is not true in general, and indeed $(\family, \preceq)$ is not a total order for even $n > 4$.
As an example, Figure \ref{fig:M6} depicts $(\M_6, \preceq)$ and there the two matchings $\{(1,6), (2,4), (3,5)\}$ and $\{(1,5),(2,6),(3,4)\}$ immediately above $M^-$ that are incomparable. To see this, note the degree sequence $(5,3,3,3,3,1)$ realizes the former but not the latter, while $(5,5,3,3,2,2)$ realizes the latter but not the former.

\begin{figure}
    \begin{center}
\begin{tikzpicture}[ultra thick, scale=0.7]

\node (a) at (0,18) {};
\node (b) at (-3,15) {};
\node (c) at (3,15) {};
\node (d) at (-6,12) {};
\node (e) at (0,12) {};
\node (f) at (6,12) {};
\node (g) at (-6,9) {};
\node (h) at (0,9) {};
\node (i) at (6,9) {};
\node (j) at (-6,6) {};
\node (k) at (0,6) {};
\node (l) at (6,6) {};
\node (m) at (0,3) {};
\node (n) at (6,3) {};
\node (o) at (0,0) {};

\foreach \x in {a,b,c,d,e,f,g,h,i,j,k,l,m,n,o} {
\foreach \y in {0,1,2,3,4,5}{
\draw [fill=black,draw=none] (\x)+(\y*0.5,0) circle (0.08cm);
};
};

\node [left] at (a) {$M^+ \;$};
\node [left] at (o) {$M^- \;$};

\draw (0,18) to [out=90,in=90] (0.5,18);
\draw (1,18) to [out=90,in=90] (1.5,18);
\draw (2,18) to [out=90,in=90] (2.5,18);

\draw (a)+(0.5,-0.5) to (-1.5,16.0);
\draw (a)+(2.0,-0.5) to (4.0,16.0);

\draw (3,15) to [out=90,in=90] (3.5,15);
\draw (4,15) to [out=90,in=90] (5.0,15);
\draw (4.5,15) to [out=90,in=90] (5.5,15);

\draw (-3,15) to [out=90,in=90] (-2.0,15);
\draw (-2.5,15) to [out=90,in=90] (-1.5,15);
\draw (-1,15) to [out=90,in=90] (-0.5,15);

\draw (c)+(2,-0.5) to (7,13.0);
\draw (c)+(0.5,-0.5) to (2,13.0);
\draw (b)+(2,-0.5) to (0.5,13.0);
\draw (b)+(0.5,-0.5) to (-4.5,13.0);

\draw (6,12) to [out=90,in=90] (6.5,12);
\draw (7,12) to [out=90,in=90] (8.5,12);
\draw (7.5,12) to [out=90,in=90] (8.0,12);

\draw (0,12) to [out=90,in=90] (1.0,12);
\draw (0.5,12) to [out=90,in=90] (2.0,12);
\draw (1.5,12) to [out=90,in=90] (2.5,12);

\draw (-6,12) to [out=90,in=90] (-4.5,12);
\draw (-5.5,12) to [out=90,in=90] (-5.0,12);
\draw (-4.0,12) to [out=90,in=90] (-3.5,12);

\draw (d)+(1.25,-0.5) to (-4.75,10.0);
\draw (e)+(1.25,-0.5) to (1.25,10.0);
\draw (f)+(1.25,-0.5) to (7.25,10.0);
\draw (d)+(2.5,-0.5) to (0,10.0);
\draw (e)+(0.0,-0.5) to (-3.5,10.0);
\draw (e)+(2.5,-0.5) to (6.0,10.0);
\draw (f)+(0.0,-0.5) to (2.5,10.0);

\draw (6,9) to [out=90,in=90] (7.0,9);
\draw (6.5,9) to [out=90,in=90] (8.5,9);
\draw (7.5,9) to [out=90,in=90] (8.0,9);

\draw (0,9) to [out=90,in=90] (1.5,9);
\draw (0.5,9) to [out=90,in=90] (2.0,9);
\draw (1.0,9) to [out=90,in=90] (2.5,9);

\draw (-6,9) to [out=90,in=90] (-4.0,9);
\draw (-5.5,9) to [out=90,in=90] (-5.0,9);
\draw (-4.5,9) to [out=90,in=90] (-3.5,9);

\draw (g)+(1.25,-0.5) to (-4.75,7.0);
\draw (g)+(2.5,-0.5) to (0,7.0);
\draw (h)+(0.0,-0.5) to (-3.5,7.0);
\draw (h)+(2.5,-0.5) to (6.0,7.0);
\draw (i)+(0.0,-0.5) to (2.5,7.0);
\draw (i)+(1.25,-0.5) to (7.25,7.0);

\draw (6,6) to [out=90,in=90] (7.5,6);
\draw (6.5,6) to [out=90,in=90] (8.5,6);
\draw (7.0,6) to [out=90,in=90] (8.0,6);

\draw (0,6) to [out=90,in=90] (2.5,6);
\draw (0.5,6) to [out=90,in=90] (1.0,6);
\draw (1.5,6) to [out=90,in=90] (2.0,6);

\draw (-6,6) to [out=90,in=90] (-4.0,6);
\draw (-5.5,6) to [out=90,in=90] (-4.5,6);
\draw (-5.0,6) to [out=90,in=90] (-3.5,6);

\draw (j)+(2,-0.5) to (0.0,4.0);
\draw (j)+(2.5,-0.5) to (6.0,4.0);
\draw (k)+(1.25,-0.5) to (1.25,4.0);
\draw (l)+(0.5,-0.5) to (2.5,4.0);
\draw (l)+(1.25,-0.5) to (7.25,4.0);

\draw (0,3) to [out=90,in=90] (2.5,3);
\draw (0.5,3) to [out=90,in=90] (1.5,3);
\draw (1.0,3) to [out=90,in=90] (2.0,3);

\draw (6,3) to [out=90,in=90] (8.0,3);
\draw (6.5,3) to [out=90,in=90] (8.5,3);
\draw (7.0,3) to [out=90,in=90] (7.5,3);

\draw (m)+(1.25,-0.5) to (1.25,1.0);
\draw (n)+(0.0,-0.5) to (2.5,1.0);

\draw (0,0) to [out=90,in=90] (2.5,0);
\draw (0.5,0) to [out=90,in=90] (2.0,0);
\draw (1.5,0) to [out=90,in=90] (1.0,0);

\node [below] at (0,0) {1};
\node [below] at (0.5,0) {2};
\node [below] at (1.0,0) {3};
\node [below] at (1.5,0) {4};
\node [below] at (2.0,0) {5};
\node [below] at (2.5,0) {6};
\end{tikzpicture}
\end{center}
\caption{The preorder $(\M_6,\preceq)$ 
}
\label{fig:M6}
\end{figure}
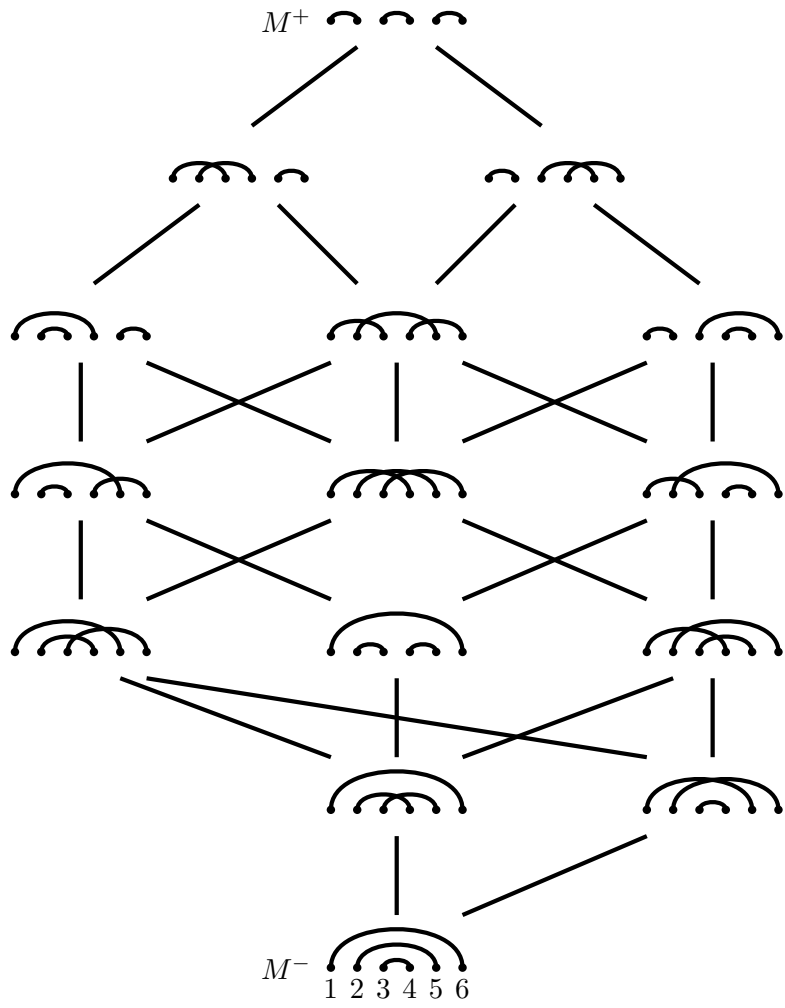

\section{Labelled h-factors}

The forwards direction of the proof of Theorem \ref{t.SpecialMatch}, as written in Section 2, is an easy argument that we can generalize to $h$-factors. To this end, suppose that $(d_1, \ldots, d_n)$ realizes the $h$-factor {$\{(1, 2,\ldots, h+1),  \ldots, (n-h, \ldots, n)\}$ via the graph $G$, where we use  $(i,i+1, \ldots, i+h)$ for each $i\in \{1,\ldots, n-h\}$ to denote the complete graph on $\{i, \ldots, i+h\}$.
Since $G$ is graphic we must have $\sum_{i=1}^n d_i$ is even, and since our $h$-factor is composed of exactly $n/(h+1)$ copies of $K_{h+1}$, we know that $n$ must be a multiple of $h+1$. Consider the graph $G'$ obtained from $G$ by deleting  all the edges in our $h$-factor that are induced by $[k+1, n]$.  Then $G'$ must satisfy the inequality of Erd\H{o}s and Gallai for $k$. This amounts precisely to (\ref{EGsumFactor}) below, motivating the following conjecture.

\begin{conjecture}\label{SpecialFactor} Let $n\in\mathbb{N}$ and $d_1 \geq d_2 \geq \dots d_n\geq 1$ be integers, and let $h$ be a positive integer. The sequence $(d_1, d_2, \ldots, d_n)$ can realize the $h$-factor $\{(1, 2,\ldots, h+1),  \ldots, (n-h, \ldots, n)\}$  iff $\sum_{i=1}^n d_i$ is even, $n$  is a multiple of $h+1$, and for every $k\in[n]$:
	\begin{equation}\label{EGsumFactor}\tag{**}
		 \sum_{i=1}^kd_i \le k(k-1)+\sum_{i=k+1}^{k+1+h-s}\min\{d_i-h+s, k\}+\sum_{i=k+1+h-s+1}^n\min\{d_i-h, k\},
	\end{equation}
where $s\in \{0, \ldots, h\}$ and $s\equiv k \pmod{h+1}$.
\end{conjecture}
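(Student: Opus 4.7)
The forwards direction is exactly the deletion argument sketched in the paragraph preceding the conjecture, so I focus on the backwards direction. My plan is to induct on $\sum_{i=1}^{n} d_i$, closely mirroring the Choudum-style proof of Theorem~\ref{t.SpecialMatch}. In the base case $\sum d_i = hn$, we have $d_i = h$ for every $i$, and the $h$-factor $F=\{(1,\ldots,h+1),\ldots,(n-h,\ldots,n)\}$ itself realizes $\pi$.

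For the inductive step, I would take $p$ to be the largest index with $d_p \ge h+1$ and $t$ to be the largest index with $d_t > d_{t+1}$ (defaulting to $t=p-1$ if $d_1 = \cdots = d_p$), and form $\pi'$ by decrementing $d_t$ and $d_p$ each by one. The crux is to verify that $\pi'$ still satisfies \eqref{EGsumFactor}. Granting this, the inductive hypothesis gives a realization $G'$ of $\pi'$ containing $F$. If $(t,p) \notin E(G')$ we add it to finish; otherwise a 4-cycle swap exactly analogous to Claim~2 of the proof of Theorem~\ref{t.SpecialMatch} exchanges some $(x,y) \in E(G')$ for $(x,t), (y,p) \notin E(G')$, followed by re-inserting $(t,p)$. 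The choice of $y$ must be such that $(x,y) \notin F$ (else removing it violates $F \subseteq G$); since $x$ has only $h$ $F$-neighbors and the assumption $d_x \ge d_p + 1$ typically gives strictly more candidates for $y$, this should be achievable.

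The main obstacle, and the reason this is stated as a conjecture rather than a theorem, is the expanded case analysis required to show that $\pi'$ still satisfies \eqref{EGsumFactor}. Where the matching proof splits on $k$ even versus odd, here one must analyze the $h+1$ residue classes $s \equiv k \pmod{h+1}$ and, within each, track how the terms $\min\{d_i - h + s, k\}$ (straddling block) and $\min\{d_i - h, k\}$ (full blocks) behave as we pass from $\pi$ to $\pi'$; each case needs to determine whether the straddler entries are saturated at $k$ or not. The parity argument of Claim~5 also generalizes but more delicately, as the clean pairing of ``$\sum d_i$ even'' with ``$n-k$ even'' breaks once $h\ge 2$, though it is partly restored by the common parity of the $h+1-s$ straddler contributions. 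Finally, in the critical slack cases where $\pi'$ violates \eqref{EGsumFactor} for some $k$ by exactly $1$, I expect (as in the matching proof) that $\pi$ must take one of a small number of explicit shapes --- essentially a run of equal entries followed by a short tail of small values --- for each of which a direct realization containing $F$ can be written down by hand.
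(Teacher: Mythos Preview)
This statement is a \emph{conjecture} in the paper, not a theorem: the paper proves only the forward (necessity) direction, via precisely the deletion argument you cite in your first sentence. There is no proof of the backward direction to compare against.

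That said, your inductive plan for sufficiency matches what the authors themselves suggest. Immediately after stating the conjecture they write that ``the induction argument of our proof would appear to mostly extend to this more general setting, except for the crucial Claim~4.'' So your Choudum-style scheme---decrement at indices $t,p$, verify \eqref{EGsumFactor} for $\pi'$, then repair via a $4$-cycle swap---is exactly the route they have in mind, and your acknowledgment that verifying \eqref{EGsumFactor} for $\pi'$ is the bottleneck agrees with their assessment.

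Where you diverge from the paper is in \emph{which} step you flag as the obstruction. You emphasize the $(h{+}1)$-way residue split, the delicacy of the parity argument (Claim~5), and the explicit tail constructions in the tight cases (Claims~6--8), implicitly treating the earlier claims as routine. The authors instead single out Claim~4 (the step establishing $d_t \ge k$) as the one that does not generalize. Since they have presumably tried to push the induction through and found that specific claim to fail, you should scrutinize the $h$-factor analog of Claim~4 before investing in the later case analysis you outline.

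One small gap in your sketch: your base case assumes $\sum d_i = hn$ forces $d_i = h$ for all $i$, which needs $d_n \ge h$; the conjecture as written has only $d_n \ge 1$ (though this is likely an oversight---compare Corollary~\ref{corSpecialFactor}, which assumes $d_n \ge h$).
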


In order to prove Conjecture \ref{SpecialFactor}, it seems that some new ideas beyond our proof of Theorem \ref{t.SpecialMatch} are needed. In fact, the induction argument of our proof would appear to mostly extend to this more general setting, except for the crucial Claim 4.

Note that in the statement of Conjecture \ref{SpecialFactor}, we have not named an $h$-factor analog for $\mmax$, and indeed this is because the $h$-factor analog of Theorem \ref{t.extremaref} is false:
there can be multiple maximum and minimum elements even when $h=2$, as shown in Examples \ref{ex.multimax} and \ref{ex.multimin} below.
For both of these examples, we let $\G^n$ be the family of all $2$-regular graphs on the vertex set $[n]$, and we define the preorder $\preceq$ on $\G_{}^n$ by saying two graphs $K,F \in \G^n$ have $K \preceq F$ if and only if every degree sequence which realizes $F$ also realizes $K$.

\begin{example}\label{ex.multimax}
We claim that the two 5-cycles $C=135421$ and $C'=123541$ are both maximum elements of the preorder $(\G^5,\preceq)$ (even though $C'$ is a switch of $C$ of type (2)). To see this, first list the degree sequences which realize \emph{at least one} of the twelve $C_5$'s in $\G^5$. By Theorem \ref{Lov}, these are just the graphic sequences $D$ of length 5 for which $D-(2,2,2,2,2)$ is also graphic:
\begin{itemize}
    \item $(4,4,4,4,4),(4,3,3,3,3),(3,3,3,3,2), (2,2,2,2,2)$,
    \item $(4,4,4,3,3),(4,4,3,3,2),(3,3,2,2,2)$, and $(4,3,3,2,2)$.
\end{itemize}
The first 4 degree sequences realize all of $\G^5$, and none of the latter 4 realize $C$ or $C'$. So not only are $C$ and $C'$ both maximum elements, but also, $C \preceq C' \preceq C$,
that is to say, $(\G_5, \preceq)$ is not antisymmetric (so not a poset). It turns out here that $135241 \in \G^5$ is a minimum element, but we do not have a sole minimum in general.
\end{example}

\begin{example}\label{ex.multimin} We claim that in $\G^{12}$, there are two minimal elements, namely the two 2-factors whose respective components are:
\begin{enumerate}[(a)]
    \item the three 4-cycles $1 \;12\; 2\; 11\; 1,\; 3\; 10\; 4\; 9\; 3,\; 5\; 8\; 6\; 7\; 5$; and
    \item the two 6-cycles
    $1\; 12\; 2\; 10\; 3\; 11\; 1,\;
    4\; 9\; 5\; 7\; 6\; 8\; 4$.
\end{enumerate}
For (a), let $G$ be a graph with degree sequence $(11,11,9,9,7,7,6,6,4,4,2,2)$ (in fact there is only one). Since $d_1=d_2=11$,  vertices  1 and 2 are the only neighbors of $11$ and $12$, thus any 2-factor of $G$ contains the cycle $1 \;12\; 2\; 11\; 1$.  More rounds of a similar argument shows that the copy of $3C_4$ in (a)  is the \emph{only} 2-factor of $G$.
It can likewise be shown that the copy of $2C_6$ from (b) is only 2-factor which realizes all of
\begin{eqnarray*}
   && (11,11,10,8,8,7,6,6,5,3,3,2), \\
   && (11,3,3,3,3,3,3,3,3,3,2,2), \quad  \text{and}\\
   && (11,11,10,8,6,6,6,5,5,3,3,2).
\end{eqnarray*}
\end{example}

Although Conjecture \ref{SpecialFactor} does not pair with an analog of Theorem \ref{t.extremaref}, if Conjecture \ref{SpecialFactor} is true then it implies a sufficient condition for sequences realizing $h$ disjoint 1-factors, as we will now explain.

Let $H_1$ and $H_2$ be two disjoint copies of $K_{2k+1}$ for some integer $k\ge 1$, and let $M_i=\{a_1^ib_1^i, \ldots, a_k^ib_k^i\}$ be a near perfect matching of $H_i$ (i.e. a matching saturating all but exactly one vertex of $H_i$) for $i=1,2$.
We define $H_1*H_2$ as the graph obtained from $(H_1-M_1)\cup (H_1-M_2)$ by adding a matching between  $\{a_j^1, b_j^1\}$ and $\{a_i^2, b_j^2\}$ for each $j\in [1,k]$.
As $E(H_i)$ can be decomposed into $(2k+1)$ near perfect matchings, it follows that $H_1*H_2$ has a 1-factorization, that is, a decomposition of $E(H_1*H_2)$ into $2k$ perfect matchings.

\begin{lemma}\label{lem:2-switch} Let $n\in\mathbb{N}$ be an even integer, let $d_1 \geq d_2 \geq \dots d_n\geq 1$ be integers, and suppose that $G$ realizes $(d_1, \ldots, d_n)$.
Suppose that $G$ has two vertex-disjoint subgraphs $H_1$ and $H_2$ each of which is isomorphic to $K_{2k+1}$ for some integer $k$.
Then there is a sequence of switches which maintains the degree sequence of $G$ and keeps the subgraph $G-V(H_1\cup H_2)$ unchanged,  but which transforms $H_1\cup H_2$ into a graph on $V(H_1\cup H_2)$  that contains  $H_1*H_2$ as a subgraph.
\end{lemma}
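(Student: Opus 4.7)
Plan. Set $V_i=V(H_i)$ and $V=V_1\cup V_2$, and let $B$ denote the bipartite subgraph of $G$ between $V_1$ and $V_2$, so that $G[V]=H_1\cup H_2\cup B$. Any 2-switch whose four vertices all lie in $V$ preserves both the full degree sequence of $G$ and the subgraph $G-V(H_1\cup H_2)$, as well as the edges from $V$ to its complement. By the classical theorem that any two simple graphs on the same vertex set with identical vertex-by-vertex degrees are connected by a sequence of 2-switches, it suffices to produce a graph $G^*$ on $V$ with $\deg_{G^*}(v)=\deg_{G[V]}(v)$ for every $v\in V$ that contains $H_1*H_2$ for some compatible choice of matching data $(v_1^*,v_2^*,M_1,M_2,\mu)$, where $\mu$ is the bipartite matching formed by $N_1 \cup \dots \cup N_k$. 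We then reach $G^*$ from $G[V]$ by a sequence of 2-switches internal to $V$, each of which is a legitimate switch of $G$.

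In $G[V]$, each vertex $v$ has degree $2k+b_v$ where $b_v=\deg_B(v)$; in $H_1*H_2$, each vertex has degree exactly $2k$. So I write $G^*=H_1*H_2\cup E^+$, with $E^+$ a simple edge set on $V$, disjoint from $E(H_1*H_2)$, whose degree sequence is $(b_v)_{v\in V}$. The allowed edges for $E^+$ are the near-perfect matching edges $M_1\cup M_2$ together with the cross-edges outside $\mu$. The simplest candidate is $E^+=B$, which already realizes the required bipartite degree sequence; this works exactly when $\mu\cap B=\emptyset$, i.e., when $\mu$ lies entirely in the bipartite complement $\overline{B}$. If $\overline{B}$ contains a matching of size $2k$, take it as $\mu$, let $v_1^*,v_2^*$ be the two uncovered vertices, and choose $M_1,M_2$ to be any compatible pairings.

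The remaining case — when $\overline{B}$ has matching number strictly less than $2k$ — is the main obstacle. Such a $\mu$ of size $2k$ in $K_{V_1,V_2}$ still exists (since $K_{V_1,V_2}$ has a matching of size $2k+1$), but must be chosen so that $\mu_B:=\mu\cap B$ is nonempty. Setting $B_0:=B\setminus\mu_B$, the set $B_0$ has degree $b_v-1$ at the $2|\mu_B|$ vertices covered by $\mu_B$ and $b_v$ elsewhere, so $E^+$ is $B_0$ together with a supplementary matching $R$ on $V(\mu_B)$, drawn from $M_1\cup M_2\cup \bigl((V_1\times V_2)\setminus(B\cup\mu)\bigr)$. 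I construct $R$ by exploiting the freedom in choosing $M_1,M_2$: pair up the vertices of $V(\mu_B)\cap V_1$ among themselves within $M_1$ as much as possible, and similarly in $V_2$, so that each such pair contributes an edge to $R$; any leftover deficient vertices (at most two on each side, arising when $|\mu_B|$ is odd) are matched using cross-edges in $\overline{B}\setminus\mu$, whose abundance follows from the density constraints on $B$ forced by $\overline{B}$ having small matching number via K\"onig's theorem.

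The principal technical step is verifying the existence of $R$ for every structure of $\mu_B$, which I expect to reduce to a Hall-type or direct counting argument exploiting the degree bounds $b_v\le 2k+1$ and the density that $B$ must have in this regime. Once $R$ is produced, $G^*=(H_1-M_1)\cup(H_2-M_2)\cup\mu\cup B_0\cup R$ is a simple graph on $V$ with the same vertex degrees as $G[V]$ and containing $H_1*H_2$, completing the proof.
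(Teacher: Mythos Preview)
Your reduction is sound: any two simple graphs on $V=V(H_1)\cup V(H_2)$ with the same vertex-by-vertex degrees are connected by $2$-switches, and since $G[V]$ is induced, each such switch is a legitimate switch of $G$ that leaves $G-V$ and all $V$--$V^c$ edges untouched. So it does suffice to exhibit \emph{some} $G^*$ on $V$ with the same degrees as $G[V]$ and containing a copy of $H_1*H_2$. Your construction in the easy case ($\overline{B}$ has a matching of size $2k$) is fine, including the observation that any size-$2k$ matching $\mu$ between $V_1\setminus\{v_1^*\}$ and $V_2\setminus\{v_2^*\}$ is compatible with \emph{some} choice of $M_1,M_2$.

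The gap is in the hard case. You overlook how restrictive the compatibility between $M_1,M_2$ and $\mu$ is: by definition of $H_1*H_2$, $\mu$ must send each $M_1$-pair $\{a,b\}\subseteq V_1$ to an $M_2$-pair $\{\mu(a),\mu(b)\}\subseteq V_2$. Since $\mu$ restricts to a bijection $A_1\to A_2$ (where $A_i=V(\mu_B)\cap V_i$), the vertices of $A_1$ you leave unpaired inside $A_1$ are sent by $\mu$ exactly to the vertices of $A_2$ left unpaired inside $A_2$. Thus when $|\mu_B|$ is odd and you have a single leftover $a\in A_1$, the forced leftover in $A_2$ is $\mu(a)$, and the only cross-edge on these two leftovers is $a\mu(a)\in\mu_B\subseteq B\cap\mu$ --- forbidden for $R$ on both counts. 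One cannot rescue this by taking $|L_1|=1$; one must enlarge the set of leftovers and find a perfect matching of $L_1$ to $\mu(L_1)$ inside $\overline{B}$ avoiding the ``diagonal'' $\{a\mu(a):a\in L_1\}$, or else change $\mu$. You have not shown that some choice of $(\mu,M_1,M_2,L_1)$ always works, and your appeal to K\"onig and Hall is only a hope at this point.

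By contrast, the paper's proof is a two-line induction on $k$ that avoids all of this machinery. Pick triangles $v_1v_2v_3\subseteq H_1$ and $u_1u_2u_3\subseteq H_2$ and look at the bipartite graph $G[\{v_1,v_2,v_3\},\{u_1,u_2,u_3\}]$. If it contains a matching of size $2$, say $v_1u_1,v_2u_2$, no switch is needed for this layer; otherwise it has a vertex cover of size $1$, so some $2\times 2$ block (say $\{v_1,v_2\}\times\{u_1,u_2\}$) is empty, and one switch replaces $v_1v_2,u_1u_2$ by $v_1u_1,v_2u_2$. Either way, those four vertices now carry the required cross-edges, and one recurses on the $K_{2k-1}$'s $H_1-\{v_1,v_2\}$ and $H_2-\{u_1,u_2\}$. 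This uses at most $k$ explicit switches and no appeal to global $2$-switch connectivity.
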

\proof
The proof proceeds by induction on $k$.
Let $v_1v_2v_3$ be a triangle  in $H_1$ and $u_1u_2u_3$ be a triangle in $H_2$.
If there is a matching of size 2 in the bipartite graph $G[\{v_1,v_2, v_3\},  \{u_1,u_2, u_3\}]$,
say $v_1u_1, v_2u_2$, then we are done when $k=1$;
and are done by applying the induction hypothesis on  $H_1-\{v_1,v_2\}$ and $H_2-\{u_1,u_2\}$
if $k\ge 2$.
Thus we assume that $G[\{v_1,v_2, v_3\},  \{u_1,u_2, u_3\}]$
has no matching of size 2.  Then we can find, say, $v_1, v_2$ and $u_1, u_2$
such that $G[\{v_1,v_2\},  \{u_1,u_2\}]$  is an empty graph.
Then we simply replace the edges $v_1v_2$ and $u_1u_2$
by $v_1u_1$ and $v_2u_2$.
Again,  we are done when $k=1$;
and are done by applying the induction hypothesis on  $H_1-\{v_1,v_2\}$ and $H_2-\{u_1,u_2\}$
if $k\ge 2$.
\qed

In the following Corollary we assume the same conditions as in Conjecture \ref{SpecialFactor}; if that conjecture holds then we can also realize a packing of perfect matchings.

\begin{corollary}[Assuming Conjecture~\ref{SpecialFactor}]\label{corSpecialFactor}
Let  $h, n \in \mathbb{N}$ be  integers and  $d_1 \geq d_2 \geq d_n\geq h$ be integers. Then  $(d_1, d_2, \ldots, d_n)$ can realize
$h$ disjoint perfect matchings in the same graph if $\sum_{i=1}^n d_i$ is even, $n$  is a multiple of $h+1$, and (**) holds for every $k\in[n]$.
\end{corollary}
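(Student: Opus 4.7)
The plan is to apply Conjecture~\ref{SpecialFactor} to obtain a realization $G$ of $(d_1,\ldots,d_n)$ that contains, as a subgraph, the specific $h$-factor $F$ consisting of the $n/(h+1)$ vertex-disjoint copies of $K_{h+1}$ on the consecutive blocks $\{1,\ldots,h+1\}$, $\{h+2,\ldots,2(h+1)\}$, $\ldots$, $\{n-h,\ldots,n\}$. From there the task is to extract $h$ pairwise disjoint perfect matchings of $[n]$ inside $G$, or inside a degree-preserving modification of $G$, and the argument splits on the parity of $h+1$.

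If $h+1$ is even, then each copy of $K_{h+1}$ in $F$ admits a 1-factorization into $h$ perfect matchings, and these can be aligned across blocks: for each $j\in[h]$, letting $N_j$ be the union of the $j$-th matching taken from each of the $n/(h+1)$ 1-factorizations yields $h$ pairwise disjoint perfect matchings of $[n]$, all contained in $F\subseteq G$.

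If $h+1$ is odd, write $h=2k$. Since $h$ disjoint perfect matchings require $n$ to be even and $h+1$ is odd, $n/(h+1)$ is itself even, so we may partition the copies of $K_{h+1}$ in $F$ into $n/(2(h+1))$ unordered pairs $(H_1^i, H_2^i)$. Process these pairs one at a time: at the $i$-th step, apply Lemma~\ref{lem:2-switch} to $H_1^i$ and $H_2^i$ to perform a sequence of switches on the current graph that preserves its degree sequence, leaves the induced subgraph outside $V(H_1^i\cup H_2^i)$ unchanged, and transforms the induced subgraph on $V(H_1^i\cup H_2^i)$ into one containing $H_1^i * H_2^i$. After all pairs have been processed, the resulting graph $G'$ still realizes $(d_1,\ldots,d_n)$ and contains $H_1^i * H_2^i$ for every $i$. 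As already noted in the paper just before Lemma~\ref{lem:2-switch}, each $H_1^i * H_2^i$ has a 1-factorization into $2k=h$ perfect matchings of $V(H_1^i\cup H_2^i)$; taking the union of the $j$-th such matching across all $i$ produces, for each $j\in[h]$, a perfect matching of $[n]$, and these $h$ matchings are pairwise disjoint and contained in $G'$.

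The one point requiring care is that the iterated use of Lemma~\ref{lem:2-switch} in the second case is legitimate: because every switch produced by that lemma touches only edges with both endpoints in $V(H_1^i\cup H_2^i)$, a $K_{h+1}$ on any other pair's vertex set survives all preceding switches intact, and the lemma's hypotheses remain satisfied at every step. Granted this observation, the whole argument reduces to Conjecture~\ref{SpecialFactor} together with the standard 1-factorization of $K_{2m}$ and the explicit 1-factorization of $H_1*H_2$ supplied in the paper.
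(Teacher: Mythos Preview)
Your proof is correct and follows essentially the same approach as the paper's: apply Conjecture~\ref{SpecialFactor} to obtain the clique $h$-factor, then 1-factorize each $K_{h+1}$ directly when $h$ is odd, and when $h$ is even pair up the cliques and invoke Lemma~\ref{lem:2-switch} on each pair to produce the 1-factorable graphs $H_1*H_2$. Your version is slightly more explicit than the paper's about why the iterated application of Lemma~\ref{lem:2-switch} is legitimate, but otherwise the arguments coincide.
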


\proof  Applying  Conjecture~\ref{SpecialFactor}, we get a realization of $(d_1, d_2, \ldots, d_n)$ that contains the
$h$-factor $H:=\{(1, 2,\ldots, h+1),  \ldots, (n-h, \ldots, n)\}$. When $h$ is odd, the subgraph $H$ decomposes into $h$ disjoint perfect matchings.
Thus we assume that $h$ is even. Then as $n$ is even, the total number of components of $H$, which is equal to $n/(h+1)$, is even. Let $H_1, \ldots, H_{n/(h+1)}$ be all the components of $H$.
We apply Lemma~\ref{lem:2-switch}  on  each  pair of $H_{2i-1}$ and $H_{2i}$, $i\in \{1, \ldots, \frac{n}{2(h+1)}\}$.
Then we get a realization $G'$ of $(d_1, d_2, \ldots, d_n)$  that contains $\bigcup_{i=1}^{\frac{n}{2(h+1)}} (H_{2i-1}*H_{2i})$
as a spanning subgraph. As $\bigcup_{i=1}^{\frac{n}{2(h+1)}} (H_{2i-1}*H_{2i})$
has a decomposition into perfect matchings, it follows that $G'$ has $h$ disjoint perfect matchings.
\qed

Brualdi \cite{brualdi1978problemes} and Busch, Ferrera, Hartke, Jacobsen, Kaul, and West \cite{busch2012packing} have a conjecture which states that for a degree sequence $(d_1, \ldots, d_n)$ where $n$ is even, it can realize
$h$ disjoint perfect matchings in the same graph if and only if $(d_1-h,\ldots, d_n-h)$ is graphic. This conjecture is open in general; see Shook \cite{Shook} for a discussion of partial results. The sufficient condition of Corollary \ref{corSpecialFactor} (and Conjecture \ref{SpecialFactor}) provides some support for this line of work.

\bibliographystyle{abbrv}
\bibliography{biblio}

\begin{thebibliography}{1}

\bibitem{brualdi1978problemes}
R.~Brualdi.
\newblock {\em Problemes combinatoires et th{\'e}orie des graphes (Colloq.
  Internat. CNRS, Univ. Orsay, Orsay, 1976)}, volume 260.
\newblock 1978.

\bibitem{busch2012packing}
A.~H. Busch, M.~J. Ferrara, S.~G. Hartke, M.~S. Jacobson, H.~Kaul, and D.~B.
  West.
\newblock Packing of graphic n-tuples.
\newblock {\em Journal of Graph Theory}, 70(1):29--39, 2012.

\bibitem{Ch}
S.~A. Choudum.
\newblock A simple proof of the {E}rdos-{G}allai theorem on graph sequences.
\newblock {\em Bull. Austral. Math. Soc.}, 33(1):67--70, 1986.

\bibitem{EG}
P.~Erd\H{o}s and T.~Gallai.
\newblock Graphs with given degrees of vertices.
\newblock {\em Mat. Lapok}, 11:264--274, 1960.

\bibitem{KT}
M.~Kano and N.~Tokushige.
\newblock Binding numbers and {$f$}-factors of graphs.
\newblock {\em J. Combin. Theory Ser. B}, 54(2):213--221, 1992.

\bibitem{Lov}
L.~Lov\'{a}sz.
\newblock Valencies of graphs with {$1$}-factors.
\newblock {\em Period. Math. Hungar.}, 5:149--151, 1974.

\bibitem{MR516262}
N.~Sauer and J.~Spencer.
\newblock Edge disjoint placement of graphs.
\newblock {\em J. Combin. Theory Ser. B}, 25(3):295--302, 1978.

\bibitem{Shook}
J.~M. Shook.
\newblock On a conjecture that strengthens {K}undu's {$k$}-factor theorem.
\newblock {\em J. Graph Theory}, 108(3):463--491, 2025.

\bibitem{Yin}
J.-H. Yin.
\newblock A note on packing of graphic {$n$}-tuples.
\newblock {\em Discrete Math.}, 339(1):132--137, 2016.

\end{thebibliography}

\end{document}